\numberwithin{equation}{section}
\DeclareMathOperator*{\esssup}{ess\,sup}
\providecommand{\U}[1]{\protect \rule{.1in}{.1in}}
\newtheorem{theorem}{Theorem}[section]
\newtheorem{corollary}[theorem]{Corollary}
\newtheorem{definition}[theorem]{Definition}
\newtheorem{lemma}[theorem]{Lemma}
\newtheorem{remark}[theorem]{Remark}
\theoremstyle{empty}
\newenvironment{proof}[1][Proof]{\noindent \textbf{#1.} }{\  \rule{0.5em}{0.5em}}
\begin{document}
\title{Stochastic linear-quadratic optimal control problems with multi-dimensional state and non-Markovian regime switching}
\author{Yuyang Chen\thanks{School of Mathematical Sciences, Shanghai Jiao Tong University, China (cyy0032@sjtu.edu.cn)}
\and
Peng Luo \thanks{School of Mathematical Sciences, Shanghai Jiao Tong University, China (peng.luo@sjtu.edu.cn). Financial support
from the National Natural Science Foundation of China (Grant No. 12101400) is gratefully acknowledged.}}

\maketitle
\begin{abstract}
This paper investigates the stochastic linear-quadratic (LQ, for short) optimal control problems with non-Markovian regime switching in a finite time horizon where the state equation is multi-dimensional. Similar to the classical stochastic LQ problems, we establish the relationship between the stochastic LQ optimal control problems with non-Markovian regime switching and the related extended stochastic Riccati equations. To solve the extended stochastic Riccati equations, we construct a monotone Piccard iterative sequence and present the link bewteen this sequence and solutions of a family of forward-backward stochastic differential equations. Relying on $L^p$ estimates for FBSDEs, we show that the extended stochastic Riccati equation has a solution. This partially addresses one question left in Hu et al. (Ann. Appl. Probab. 32(1): 426-460, 2022). Finally, the stochastic LQ optimal control problems with non-Markovian regime switching is solved.
\end{abstract}

\section{Introduction}
Linear-quadratic (LQ, for short) optimal control problem is one of the most important control problems, because it not only stands out on its own as an interesting mathematically theoretic problem, but also appears in many fields, such as engineering, management science and mathematical finance, see \cite{Chen and Zhu 2018,Li and Ng 2000,Lim and Zhou 2002,Tchamna et al. 2016}.

In 1968, Wonham \cite{Wonham 1968} considered a simple deterministic form of the Riccati equation and connected it with the LQ control problems. Since then, the study of the corresponding Riccati equation has become an important means to solve the stochastic LQ problem. Bismut \cite{Bismut 1976} proposed a more general form in which the coefficients are random but the Riccati equation has an additional martingale part. However, he was only able to prove the existence and uniqueness of the solution in a more simple case. Bensoussan \cite{Bensoussan 1982} derived the various forms of the stochastic maximum principle and showed that the value function of a certain stochastic control problem is the solution of the Hamilton-Jacobi equation. A stochastic Riccati equation with some coefficients degenerating to zero was discussed by Peng \cite{Peng 1992} and he gave the existence and uniqueness result with Picard's method. Chen et al. \cite{Chen et al. 1998} solved a special case in which all of the coefficients are deterministic functions. Kohlmann and Tang \cite{Kohlmann and Tang 2002} referred to the one-dimensional stochastic Riccati equation and solved it with Lepeltier and San Martin's method. They \cite{Kohlmann and Tang 2003a} obtained the solvability results of matrix-valued stochastic Riccati equations in two cases, the so-called singular one and regular one, on the base of Peng's work \cite{Peng 1992}. Kohlmann and Tang \cite{Kohlmann and Tang 2003b} developed a perturbation method to solve the stochastic Riccati equation with bounded coefficients under some extra conditions. Hu and Zhou \cite{Hu and Zhou 2005} discussed a stochastic linear-quadratic optimal control problem where the control is constrained in a cone, which is associated with two extended stochastic Riccati equations. Chen and Yong \cite{Chen and Yong 2001} obtained a sufficient condition and a necessary condition for stochastic linear-quadratic optimal control problems although they were unable to solve the Riccati equations. Rami et al. \cite{Rami et al. 2002} introduced a generalized Riccati equation with a pseudo matrix inverse and an additional algebraic constraint.

The regime switching model well simulates the transition under different states. The combination of stochastic LQ problems and regime switching can be dated back to \cite{Li et al. 2003,Liu et al. 2005,Zhou and Yin 2003}. Recently, the stochastic LQ optimal control problem with regime switching has attracted a lot of interest due to its application in financial management and economics. Zhang et al. \cite{Zhang et al. 2021} discussed the multi-dimensional stochastic LQ problem with Markovian regime switching and obtained the solvability of the matrix-valued Riccati equation. However, their coefficient matrices were deterministic. In \cite{Hu et al. 2022a} and \cite{Hu et al. 2022b}, Hu et al. solved the one-dimensional stochastic LQ problem with non-Markovian regime switching and the associated scalar-valued stochastic Riccati equation, and they applied it to the mean variance asset liability management. We refer readers to \cite{Alia and Alia 2023,Hao et al. 2022,Hu et al. 2022c} for more studies on the stochastic LQ problems with regime switching.

However, multi-dimensional stochastic LQ optimal problem with non-Markovian regime switching is still challenging (see \cite{Hu et al. 2022a}). The objective of this paper is to study this problem. During the preparation of this work, we notice that Wen et al. \cite{Wen et al. 2022} also studied this problem while the title of their paper is misleading. Compare with Wen et al. \cite{Wen et al. 2022}, we provide a different approach based on FBSDE theory. Firstly, we establish the relationship between the stochastic LQ optimal control problems with non-Markovian regime switching and the related extended stochastic Riccati equations. However, the extend stochastic Riccati equation turns to be a family of highly nonlinear matrix-valued BSDEs. To the best of our knowledge, no existing results could be directly used to solve it. To solve this extended stochastic Riccati equation, we first construct a monotone sequence $\left\{(\widetilde{P}_{k},\widetilde{\Lambda}_{k})\right\}_{k\geq0}$ and provide a priori estimates. Next we obtain the convergence of $\left\{\widetilde{P}_{k}\right\}_{k\geq0}$ by monotone convergence theorem and dominated convergence theorem. In order to prove the convergence of $\left\{\widetilde{\Lambda}_{k}\right\}_{k\geq0}$, we need to show that $\left\{\widetilde{P}_{k}\right\}_{k\geq0}$ converges in some finer space. The key ingredient is to link $\left\{(\widetilde{P}_{k},\widetilde{\Lambda}_{k})\right\}_{k\geq1}$ with the solutions of fully coupled FBSDEs $\left\{(\mathbf{X}_k,\mathbf{Y}_k,\mathbf{Z}_k)\right\}_{k\geq 1}$. We establish $L^p$ estimates and obtain convergence of $\left\{(\mathbf{X}_k,\mathbf{Y}_k,\mathbf{Z}_k)\right\}_{k\geq 1}$. Finally we get the convergence of $\left\{(\widetilde{P}_{k},\widetilde{\Lambda}_{k})\right\}_{k\geq0}$ and obtain a solution for the extended stochastic Riccati equation. This partially addresses one question left in Hu et al. (Ann. Appl. Probab. 32(1): 426-460, 2022). Relying on this solvability, we obtain the optimal feedback control of stochastic LQ problem with non-Markovian regime switching.

The rest of the paper is organized as follows. We formulate a multi-dimensional stochastic LQ problem with non-Markovian regime switching and state our main results in Section 2. In Section 3, we provide the existence of solutions for the matrix-valued stochastic Riccati equations with regime switching. Section 4 is denvoted to get the optimal feedback control of the stochastic LQ problem with non-Markovian regime switching.

\section{Formulation of the problem and main results}
Let $(\Omega, \mathcal{F}, \mathbb{P})$ be a fixed complete probability space on which are defined a standard one-dimensional Brownian motion $W=\{W(t);0\leq t<\infty\}$ and a continuous-time stationary Markov chain $\alpha_{t}$ valued in a finite state space $\mathcal{M}=\{1,2, \ldots, \ell\}$ with $\ell>1$. We assume $W(t)$ and $\alpha_{t}$ are independent processes. The Markov chain has a generator $Q=\left(q_{i j}\right)_{\ell \times \ell}$ with $q_{i j} \geq 0$ for $i \neq j$ and $\sum_{j=1}^{\ell} q_{i j}=0$ for every $i \in \mathcal{M}$. Define the filtrations $\mathcal{F}_{t}=\sigma\left\{W(s), \alpha_{s}: 0 \leq s \leq t\right\} \vee \mathcal{N}$ and $\mathcal{F}_{t}^{W}=\sigma\{W(s): 0 \leq s \leq t\} \vee \mathcal{N}$, where $\mathcal{N}$ is the totality of all the $\mathbb{P}$-null sets of $\mathcal{F}$. For a random variable $\eta$, $\|\eta\|_{\infty}$ denotes the $L^{\infty}$-norm of $\eta$, i.e., $\|\eta\|_{\infty}:=\mathop{\esssup}\limits_{\omega}|\eta(\omega)|$. Equalities and inequalities between random variables and processes are understood in the $P$-a.s. and $P\otimes dt$-a.e. sense, respectively.

We use the following notation throughout the paper:
\begin{align*}
&\mathbb{R}^{n}:\text{ the }n\text{-dimensional Euclidean space with the Euclidean norm }|\cdot|;\\
&\mathbb{R}^{m\times n}:\text{ the Euclidean space of all }(m\times n)\text{ real matrices};\\
&\mathbb{S}^{n}:\text{ the space of all symmetric }(n\times n)\text{ real matrices};\\
&I_{n}:\text{ the identity matrix of size }n;\\
&M^{\top}:\text{ the transpose of a matrix }M;\\
&tr(M):\text{ the trace of a matrix }M;\\
&\langle\cdot,\cdot\rangle:\text{ the Frobenius inner product on }\mathbb{R}^{n\times m},\text{ which is defined by }\langle A,B\rangle=tr(A^{\top}B);\\
&|M|:\text{ the Frobenius norm of a matrix }M,\text{ defined by }\left(tr(MM^{\top})\right)^{\frac{1}{2}};
\end{align*}
Furthermore, we introduce the following spaces of random processes: for Euclidean space $\mathbb{H}=\mathbb{R}^{n},\mathbb{R}^{n\times m},\mathbb{S}^{n}$ and $p\geq2$, 
\begin{align*}
&L_{\mathcal{F}}^{\infty}(\Omega;\mathbb{H})=\left\{\xi:\Omega\rightarrow\mathbb{H}\mid\xi\text{ is }\mathcal{F}_{T}\text{-measurable, and essentially bounded }\right\};\\
&\begin{aligned}\mathcal{H}_{\mathcal{F}}^{p}(0,T;\mathbb{H})=
  &\Bigg\{\phi:[0,T]\times \Omega\rightarrow\mathbb{H}\mid\phi(\cdot)\text{ is an }\left\{\mathcal{F}_{t}\right\}_{t\geq0}\text{-adapted process}\\
  &\text{ with }\mathbb{E}\left(\int_{0}^{T}|\phi(t)|dt\right)^{p}<\infty\Bigg\};
  \end{aligned}\\
&\begin{aligned}L_{\mathcal{F}}^{p}(0,T;\mathbb{H})=
&\Bigg\{\phi:[0,T]\times \Omega\rightarrow\mathbb{H}\mid\phi(\cdot)\text{ is an }\left\{\mathcal{F}_{t}\right\}_{t\geq0}\text{-adapted process}\\
&\text{ with }\mathbb{E}\left(\int_{0}^{T}|\phi(t)|^{2}dt\right)^{\frac{p}{2}}<\infty\Bigg\};
\end{aligned}\\
&\begin{aligned}L_{\mathcal{F}}^{p}(\Omega;C(0,T;\mathbb{H}))=&\Bigg\{\phi:[0,T]\times\Omega\rightarrow\mathbb{H}\mid\phi(\cdot)\text{ is an }\left\{\mathcal{F}_{t}\right\}_{t \geq 0}\text{-adapted process}\\
&\text{ and continuous with }\mathbb{E}\left(\sup_{t\in[0,T]}|\phi(t)|^{p}\right)<\infty\Bigg\};\end{aligned}\\
&\begin{aligned}L_{\mathcal{F}}^{\infty}(0,T;\mathbb{H})=&\Bigg\{\phi:[0,T]\times\Omega\rightarrow\mathbb{H}\mid\phi(\cdot)\text{ is an }\left\{\mathcal{F}_{t}\right\}_{t\geq 0}\text{-adapted essentially bounded process}\Bigg\}.\end{aligned}
\end{align*}
$L_{\mathcal{F}^W}^{\infty}(\Omega;\mathbb{H})$, $\mathcal{H}_{\mathcal{F}^W}^{p}(0,T;\mathbb{H})$, $L_{\mathcal{F}^W}^{p}(0,T;\mathbb{H})$, $L_{\mathcal{F}^W}^{p}(\Omega;C(0,T;\mathbb{H}))$ and $L_{\mathcal{F}^W}^{\infty}(0,T;\mathbb{H})$ are defined in a same manner by replacing $\mathcal{F}$ by $\mathcal{F}^{W}$.

We now introduce the multi-dimensional stochastic LQ optimal problem with non-Markovian regime switching. Consider the following $n$-dimensional controlled linear stochastic differential equation on the finite time interval $[0,T]$:
\begin{equation}\label{state}
\left\{\begin{array}{l}
dX(t)=\left[A\left(t,\alpha_{t}\right)X(t)+B\left(t,\alpha_{t}\right)u(t)\right]dt+\left[C\left(t,\alpha_{t}\right)X(t)+D\left(t,\alpha_{t}\right)u(t)\right]dW(t),~t\in[0,T],\\
X(0)=x, \alpha_{0}=i_{0},
\end{array}\right.
\end{equation}
where $A(t,\omega,i),B(t,\omega,i),C(t,\omega,i),D(t,\omega,i)$ are all $\left\{\mathcal{F}^{W}_{t}\right\}_{t\geq 0}$-adapted processes of suitable sizes for $i \in \mathcal{M}$ and $x\in\mathbb{R}^{n}$ is an initial state, $i_{0}\in\mathcal{M}$ is an initial regime. The solution $X=\{X(t);0\leq t\leq T\}$ of \eqref{state}, valued in $\mathbb{R}^{n}$, is called a state process; the process $u=\{u(t);0\leq t\leq T\}$ of \eqref{state}, valued in $\mathbb{R}^{m}$, is called a control which influences the state $X$, and is taken from the space $\mathcal{U}[0,T]:=L_{\mathcal{F}}^{2}(0,T;\mathbb{R}^{m})$.

In order to measure the performance of control $u(\cdot)$, we introduce the following quadratic cost functional:
\begin{equation}\label{cost}
\begin{aligned}
J\left(x, i_{0}, u(\cdot)\right):=\mathbb{E}\left[\langle G(\alpha_{T})X(T), X(T)\rangle+\int_{0}^{T}\left\langle\left(\begin{array}{cc}
Q(t,\alpha_{t}) & S(t,\alpha_{t})^{\top} \\
S(t,\alpha_{t}) & R(t,\alpha_{t})
\end{array}\right)\left(\begin{array}{c}
X(t) \\
u(t)
\end{array}\right),\left(\begin{array}{c}
X(t) \\
u(t)
\end{array}\right)\right\rangle dt\right].
\end{aligned}
\end{equation}
For state equation \eqref{state} and cost functional \eqref{cost}, we introduce the following assumption:\\
$(\mathscr{A}1)$ For all $i \in \mathcal{M}$,
$$
\left\{\begin{array}{ll}
A(t,\omega,i),~C(t,\omega,i)\in L_{\mathcal{F}^{W}}^{\infty}(0,T;\mathbb{R}^{n\times n
}),\\
B(t,\omega,i),~D(t,\omega,i)\in L_{\mathcal{F}^{W}}^{\infty}(0,T;\mathbb{R}^{n\times m}),\\
Q(t,\omega,i)\in L_{\mathcal{F}^{W}}^{\infty}(0,T;\mathbb{S}^{n}),\\
S(t,\omega,i)\in L_{\mathcal{F}^{W}}^{\infty}(0,T;\mathbb{R}^{m\times n}),\\
R(t,\omega,i)\in L_{\mathcal{F}^{W}}^{\infty}(0,T;\mathbb{S}^{m}),\\
G(\omega,i)\in L_{\mathcal{F}^{W}}^{\infty}(\Omega;\mathbb{S}^{n}).
\end{array}\right.
$$

Under condition $(\mathscr{A}1)$, for any initial state $x$ and any control $u(\cdot)\in\mathcal{U}[0,T]$, standard SDE theory shows that equation \eqref{state} has a unique solution $X(\cdot)\in L_{\mathcal{F}}^{2}(\Omega;C(0,T;\mathbb{R}^{n}))$. We call such $(X(\cdot), u(\cdot))$ an admissible pair.

Then the following problem, called stochastic linear-quadratic optimal control problem with regime switching, can be formulated.\\
\\
$\mathbf{Problem(SLQ)}$. For any initial pair $\left(x, i_{0}\right)\in\mathbb{R}^{n}\times\mathcal{M}$, find a control $u^{*}\in\mathcal{U}[0,T]$ such that
\begin{equation}\label{SLQ}
J\left(x,i_{0},u^{*}\right)=\inf_{u\in\mathcal{U}[0,T]}J\left(x,i_{0},u\right)\equiv V\left(x,i_{0}\right).
\end{equation}
Any element $u^{*}\in\mathcal{U}[0,T]$ satisfying \eqref{SLQ} is called an optimal control of Problem(SLQ) corresponding to the initial pair $\left(x, i_{0}\right)\in\mathbb{R}^{n}\times\mathcal{M}$, the corresponding state process $X^{*}(\cdot)\equiv X(\cdot;u^{*})$ is called an optimal state process. We also call $V\left(x,i_{0}\right)$ the value function of Problem(SLQ). Our objective is to solve Problem(SLQ).

\subsection{The extended stochastic Riccati equation}
Inspired by the relationship between the stochastic LQ problem and a stochastic Riccati equation (see \cite{Peng 1992,Sun and Yong 2020,Sun et al. 2021}), we first study the following extended stochastic Riccati equation (ESRE, for short) with regime switching:
\begin{equation}\label{Riccati}
\left\{
\begin{array}{l}
\begin{aligned}
dP(t,i)=&-\left[P(t,i)A(t,i)+A(t,i)^{\top}P(t,i)+C(t,i)^{\top}P(t,i)C(t,i)+\Lambda(t,i) C(t,i)\right.\\
&\quad+C(t,i)^{\top}\Lambda(t,i)+Q(t,i)+\textstyle\sum_{j=1}^{l}q_{ij}P(t,j)-\left(P(t,i)B(t,i)+C(t,i)^{\top}P(t,i)D(t,i)\right.\\
&\quad\left.+\Lambda(t,i)D(t,i)+S(t,i)^{\top}\right)\left(R(t,i)+D(t,i)^{\top}P(t,i)D(t,i)\right)^{-1}\left(B(t,i)^{\top}P(t,i)\right.\\
&\quad\left.\left.+D(t,i)^{\top}P(t,i)C(t,i)+D(t,i)^{\top}\Lambda(t,i)+S(t,i)\right)\right]dt+\Lambda(t,i) dW(t),
\end{aligned}\\
R(t,i)+D(t,i)^{\top}P(t,i)D(t,i)>0,\\
P(T,i)=G(i),~t\in[0,T],~i\in\mathcal{M}.
\end{array}
\right.
\end{equation}

\begin{definition}
A vector process $(P(i),\Lambda(i))_{i=1}^{\ell}$ is called a solution of ESRE \eqref{Riccati}, if it satisfies \eqref{Riccati}, and $(P(i),\Lambda(i))\in L_{\mathcal{F}^{W}}^{2}(\Omega;C(0,T;\mathbb{S}^{n}))\times L_{\mathcal{F}^{W}}^{2}(0,T;\mathbb{S}^{n})$ for all $i\in\mathcal{M}$.
\end{definition}

For the classical LQ problem, one can see that the positive definite coefficients in \eqref{cost} is usually a sufficient condition for the solvability of Riccati equation and then LQ problem, for example \cite{Tang 2003}. Therefore, we also introduce the following assumption.

$(\mathscr{A}2)$ For all $i \in \mathcal{M},t\in[0,T]$ and some $\delta>0$,
$$
R(t,i)\geq\delta I_{m},\quad Q(t,i)-S(t,i)^{\top}R(t,i)^{-1}S(t,i)\geq0,\quad G(i)\geq0.
$$

\begin{remark}
In fact, with the bounded coefficients, solution to ESRE \eqref{Riccati} belongs to a nicer space, i.e., for all $i\in\mathcal{M}$, $P(i)\in L_{\mathcal{F}^{W}}^{\infty}(0,T;\mathbb{S}^{n})$ and $\Lambda(i)$ is in the class of martingales of bounded mean oscillation, briefly called BMO martingales.
\end{remark}

Now from \cite{Kazamaki 1994}, we denote that the process $\int_{0}^{t}\phi(s)dW(s)$ is a BMO martingale if and only if there exists a constant $c>0$ such that
$$
\mathbb{E}\left[\int_{\tau}^{T}|\phi(s)|^{2}ds\mid\mathcal{F}_{\tau}^{W}\right]\leq c
$$
for all $\left\{\mathcal{F}_{t}^{W}\right\}_{t\geq 0}$-stopping times $\tau\leq T$. The Dol\'eans-Dade stochastic exponential
$$
\mathcal{E}\left(\int_{0}^{t}\phi(s)dW(s)\right)
$$
of a BMO martingale $\int_{0}^{t}\phi(s)dW(s)$ is a uniformly integrable martingale and can be denoted by $\mathcal{E}_{t}(\phi\cdot W)$. For $0\leq s<t<\infty$, $\mathcal{E}_{s,t}(\phi\cdot W)$ denotes the stochastic exponential on the time index $[s,t]$. We also denote
$$
L_{\mathcal{F}^{W}}^{2,\mathrm{BMO}}(0,T;\mathbb{H})=\left\{\phi\in L_{\mathcal{F}^{W}}^{2}\left(0,T;\mathbb{H}\right)\mid\int_{0}^{t}\phi(s)dW(s)\text { is a BMO martingale on }[0, T]\right\}.
$$

If there is no ambiguity, we write either $\mathbb{E}[\cdot|\mathcal{F}_{t}]$ or $\mathbb{E}[\cdot|\mathcal{F}^{W}_{t}]$ as $\mathbb{E}_{t}[\cdot]$.

We are now ready to state our main results.

\subsection{Main results}
ESRE \eqref{Riccati} is a system of matrix-valued BSDEs with highly nonlinear generators, whose solvability is still challenging (see Hu et al. \cite{Hu et al. 2022a}). Our first main result gives the solvability of ESRE \eqref{Riccati} under certain conditions.
\begin{theorem}\label{th-a}
Let $(\mathscr{A}1)$ and $(\mathscr{A}2)$ hold. There exists a sufficiently small constant $L_{\sigma}>0$, when
\begin{equation}\label{condition}
e^{-q_{ii}t}|D(t,i)R(t,i)^{-1}D(t,i)^{\top}|\leq L_{\sigma},\quad i\in\mathcal{M},~t\in[0,T],
\end{equation}
ESRE \eqref{Riccati} has a solution $\left(P(i),\Lambda(i)\right)_{i=1}^{l}$ such that $\left(P(i),\Lambda(i)\right)\in L_{\mathcal{F}^{W}}^{\infty}(0,T;\mathbb{S}^{n})\times L_{\mathcal{F}^{W}}^{2,\mathrm{BMO}}(0,T;\mathbb{S}^{n})$ and $P(i)\geq0$ for $i\in\mathcal{M}$.
\end{theorem}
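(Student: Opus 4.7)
The plan is to build a monotone Picard iterative sequence $\{(\widetilde{P}_{k}(i),\widetilde{\Lambda}_{k}(i))\}_{k\geq 0,\, i\in\mathcal{M}}$ whose limit solves ESRE~\eqref{Riccati}. I would initialize $\widetilde{P}_{0}(i)$ as the nonnegative bounded solution of the linear matrix-valued BSDE obtained from \eqref{Riccati} by deleting the quadratic feedback term $(PB+C^{\top}PD+\Lambda D+S^{\top})(R+D^{\top}PD)^{-1}(\cdots)$; this system is linear in $P$ and $\Lambda$, coupled across regimes only through $\sum_{j}q_{ij}P(\cdot,j)$, and solvable by standard theory under $(\mathscr{A}1),(\mathscr{A}2)$. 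For the induction step, given $(\widetilde{P}_{k},\widetilde{\Lambda}_{k})$ I would define $(\widetilde{P}_{k+1},\widetilde{\Lambda}_{k+1})$ by freezing the nonlinearity $(R+D^{\top}\widetilde{P}_{k}D)^{-1}$ and the $\Lambda$-terms inside the quadratic expression at the level $k$, while the remaining linear occurrences of $P$ and $\Lambda$ stay at level $k+1$; the resulting matrix BSDE has a Lipschitz generator on each regime and is well-posed by classical BSDE results together with the finite-state coupling.

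Next, I would prove the two-sided bound $0\leq\widetilde{P}_{k+1}\leq\widetilde{P}_{k}$ by a matrix comparison argument: completing the square in the quadratic term shows that $\widetilde{P}_{k}-\widetilde{P}_{k+1}$ satisfies a linear matrix BSDE with nonnegative driver, and nonnegativity $\widetilde{P}_{k}\geq 0$ follows from $G(i)\geq 0$ and $Q-S^{\top}R^{-1}S\geq 0$. Uniform boundedness in $L_{\mathcal{F}^{W}}^{\infty}(0,T;\mathbb{S}^{n})$ is inherited from $\widetilde{P}_{0}$, and a priori BMO bounds on $\widetilde{\Lambda}_{k}$ follow from applying It\^o's formula to $|\widetilde{P}_{k}|^{2}$ against a stopping time $\tau$ and using the uniform $L^{\infty}$ bound on $\widetilde{P}_{k}$. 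Monotone and dominated convergence then yield a pointwise almost-sure limit $\widetilde{P}_{k}(t,i)\to P(t,i)$ with $0\leq P(i)\in L_{\mathcal{F}^{W}}^{\infty}(0,T;\mathbb{S}^{n})$. The hard part, and the main obstacle, is the convergence of the martingale integrands $\widetilde{\Lambda}_{k}$, since monotone convergence does not apply to them.

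To resolve this, I would identify $(\widetilde{P}_{k},\widetilde{\Lambda}_{k})$ with the $(\mathbf{Y}_{k},\mathbf{Z}_{k})$-component of a family of fully coupled FBSDEs $\{(\mathbf{X}_{k},\mathbf{Y}_{k},\mathbf{Z}_{k})\}_{k\geq 1}$, in which $\mathbf{X}_{k}$ evolves under the feedback induced by $\widetilde{P}_{k-1}$. The smallness condition $e^{-q_{ii}t}|D R^{-1}D^{\top}|\leq L_{\sigma}$ is precisely what is needed to make this forward-backward coupling contractive in $L^{p}$ for $p\geq 2$; the factor $e^{-q_{ii}t}$ arises as the probability of remaining in regime $i$ up to time $t$ when one conditions on $\mathcal{F}^{W}$ and integrates out the Markov chain. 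From the resulting $L^{p}$ a priori estimates I would derive a Cauchy property for $\{(\widetilde{P}_{k+1}-\widetilde{P}_{k},\widetilde{\Lambda}_{k+1}-\widetilde{\Lambda}_{k})\}$ in $L_{\mathcal{F}^{W}}^{\infty}(0,T;\mathbb{S}^{n})\times L_{\mathcal{F}^{W}}^{2,\mathrm{BMO}}(0,T;\mathbb{S}^{n})$, giving a limit $\Lambda(i)$ of $\widetilde{\Lambda}_{k}(i)$ in this finer space. Passing to the limit in each term of the defining BSDE, the uniform $L^{\infty}$ bound on $\widetilde{P}_{k}$ ensures the nonlinearity $(R+D^{\top}\widetilde{P}_{k}D)^{-1}$ converges strongly, so the limit pair $(P(i),\Lambda(i))_{i=1}^{\ell}$ satisfies~\eqref{Riccati} in the required space, with $P(i)\geq 0$ preserved. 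The technical bottleneck is the $L^{p}$ estimate for the coupled FBSDE; this is where $L_{\sigma}$ small is essential and where the bulk of the work lies.
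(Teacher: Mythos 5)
Your overall architecture coincides with the paper's: initialize with the linear BSDE, build a monotone Picard sequence, pass to the limit in $\widetilde{P}_k$ by monotone convergence, and recover convergence of $\widetilde{\Lambda}_k$ by linking the iterates to a family of coupled FBSDEs with $L^p$ estimates under the smallness condition. However, there are two places where your plan, as written, has genuine gaps rather than just omitted detail.

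First, the design of the iteration matters, and yours does not work as described. You propose to freeze $(R+D^{\top}\widetilde{P}_kD)^{-1}$ and the $\Lambda$-occurrences inside the quadratic term at level $k$ and claim the resulting generator is Lipschitz; but the term $\bigl(P_{k+1}B+C^{\top}P_{k+1}D+\cdots\bigr)M_k\bigl(B^{\top}P_{k+1}+D^{\top}P_{k+1}C+\cdots\bigr)$ is still quadratic in $P_{k+1}$, and if you instead freeze an entire factor (policy iteration) the coefficients of the linearized equation involve $\widehat{\theta}(\widetilde{P}_k,\widetilde{\Lambda}_k)$, which is only BMO, so you land on a matrix BSDE with unbounded stochastic-Lipschitz coefficients whose well-posedness and comparison are not classical. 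The paper avoids both problems by freezing \emph{only} the regime-coupling term $\sum_{j\neq i}q_{ij}e^{(q_{ii}-q_{jj})t}\widetilde{P}_k(t,j)$ (a bounded nonnegative source absorbed into $Q$) and solving, at each step, the \emph{full} single-regime stochastic Riccati equation via Peng and Tang. This choice is also what makes monotonicity provable: the paper writes the generator as $F\bigl(t,i,\widetilde{P},\widetilde{\Lambda},\widehat{\theta}(\widetilde{P},\widetilde{\Lambda})\bigr)=\min_{\theta}F(t,i,\widetilde{P},\widetilde{\Lambda},\theta)$, which requires each iterate to satisfy the genuine Riccati equation, and then compares consecutive iterates through a dedicated comparison lemma (Lemma 3.5) for matrix BSDEs whose drivers contain the BMO terms $\Lambda C+C^{\top}\Lambda$. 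That lemma is itself where the smallness of $e^{-q_{ii}t}|DR^{-1}D^{\top}|$ first enters (to control the stochastic exponential of a BMO integrand via reverse Hölder), not only in the FBSDE estimates as you suggest; your "linear matrix BSDE with nonnegative driver" step silently assumes a comparison theorem that is exactly the nontrivial content here.

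Second, in the FBSDE step you "identify $(\widetilde{P}_k,\widetilde{\Lambda}_k)$ with the $(\mathbf{Y}_k,\mathbf{Z}_k)$-component," but the relation is $\mathbf{Y}_k=\widetilde{P}_k\mathbf{X}_k$ and $\mathbf{Z}_k=\widetilde{\Lambda}_k\mathbf{X}_k+\widetilde{P}_kC\mathbf{X}_k+\widetilde{P}_kD\mathbf{u}_k$, so recovering $\widetilde{P}_k$ and $\widetilde{\Lambda}_k$ from the FBSDE requires multiplying by $\mathbf{X}_k^{-1}$ and establishing uniform $L^{q}$ bounds on $\sup_t|\mathbf{X}_k(t,i)^{-1}|$. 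Since the diffusion coefficient of $\mathbf{X}_k^{-1}$ involves $\widetilde{\Lambda}_k$, which is only BMO, this again needs the Kazamaki reverse Hölder and John--Nirenberg machinery together with the smallness of $L_{\sigma}$; it is a substantial step that your outline omits entirely. Finally, your claim of a Cauchy property directly in $L^{\infty}\times\mathrm{BMO}$ is stronger than needed and harder than what the paper does: it obtains convergence in $L^{r}(\Omega;C([0,T]))\times L^{r}$ for $r>2$, passes to the limit in the equation, and only then proves by a separate a priori estimate on the limit equation that the solution lies in $L^{\infty}\times\mathrm{BMO}$. As a minor point, the factor $e^{-q_{ii}t}$ in the condition is not a survival probability of the chain; it is the exponential rescaling $\widetilde{P}(t,i)=e^{q_{ii}t}P(t,i)$, $\widetilde{R}(t,i)=e^{q_{ii}t}R(t,i)$ that removes the diagonal generator term, so that the condition reads $|D\widetilde{R}^{-1}D^{\top}|\leq L_{\sigma}$ for the transformed system.
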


Relying on the solvability of ESRE \eqref{Riccati}, we are able to solve Problem (SLQ). Our second main result gives an optimal feedback control and an optimal value for Problem (SLQ).
\begin{theorem}\label{th-b}
Under the conditions of Theorem \ref{th-a}, Problem (SLQ) admits an optimal control, as a feedback function of the time $t$, the market regime $i$ and the state $X$,
\begin{equation}\label{optimal}
\begin{aligned}
u^{*}(t,i,X)=&-\left(R(t,i)+D(t,i)^{\top}P(t,i)D(t,i)\right)^{-1}\\
&\cdot\left(B(t,i)^{\top}P(t,i)+D(t,i)^{\top}P(t,i)C(t,i)+D(t,i)^{\top}\Lambda(t,i)+S(t,i)\right)X.
\end{aligned}
\end{equation}
Moreover, the corresponding optimal value is
\begin{align*}
V(x, i_{0})=\langle P(0,i_{0})x,x\rangle
\end{align*}
where $\left(P(i),\Lambda(i)\right)_{i=1}^{l}$ is the solution of ESRE \eqref{Riccati} obtained in Theorem \ref{th-a}.
\end{theorem}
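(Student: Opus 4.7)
The plan is to adapt the classical verification and completion-of-squares argument for stochastic LQ control to the regime-switching setting, exploiting the solution $(P(i),\Lambda(i))_{i=1}^{\ell}$ delivered by Theorem \ref{th-a}. Fix any admissible pair $(X(\cdot),u(\cdot))$ with initial data $(x,i_{0})$, and set $M(t,i):=R(t,i)+D(t,i)^{\top}P(t,i)D(t,i)$ and $N(t,i):=B(t,i)^{\top}P(t,i)+D(t,i)^{\top}P(t,i)C(t,i)+D(t,i)^{\top}\Lambda(t,i)+S(t,i)$, so that the candidate feedback \eqref{optimal} reads $u^{*}(t,\alpha_{t},X(t))=-M(t,\alpha_{t})^{-1}N(t,\alpha_{t})X(t)$. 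By $(\mathscr{A}2)$ and $P(t,i)\geq 0$, one has $M(t,i)\geq\delta I_{m}$, so $M$ is invertible with bounded inverse.

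First, I would apply a generalized It\^o formula to $\langle P(t,\alpha_{t})X(t),X(t)\rangle$. Since $P(\cdot,i)$ is an It\^o semimartingale driven by $W$ for each fixed $i$ and $\alpha$ is an independent continuous-time Markov chain, $dP(t,\alpha_{t})=dP(t,i)|_{i=\alpha_{t-}}+(P(t,\alpha_{t})-P(t,\alpha_{t-}))$, and the compensator of the pure-jump part is $\sum_{j}q_{\alpha_{t},j}P(t,j)\,dt$, which cancels the corresponding term already built into the ESRE generator in \eqref{Riccati}. Substituting the remaining drift, combining with the It\^o cross-variation $2(C X+D u)^{\top}\Lambda X\,dt$ from the Brownian parts, and regrouping with the running cost integrand, the symmetry of $P$ makes the $PA+A^{\top}P$, $C^{\top}PC$, and $\Lambda C+C^{\top}\Lambda$ terms cancel against their It\^o counterparts, and the deterministic part collapses to a perfect square:
\[
d\langle PX,X\rangle+\big(\langle QX,X\rangle+2\langle SX,u\rangle+\langle Ru,u\rangle\big)dt=\langle M(u-u^{*}),u-u^{*}\rangle dt+d\mathcal{M}(t),
\]
where $\mathcal{M}$ collects the Brownian contribution $\langle\Lambda X,X\rangle\,dW+2(CX+Du)^{\top}PX\,dW$ and the pure-jump $\alpha$-martingale. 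Integrating over $[0,T]$, using $P(T,\alpha_{T})=G(\alpha_{T})$, and taking expectation gives
\[
J(x,i_{0},u)=\langle P(0,i_{0})x,x\rangle+\mathbb{E}\int_{0}^{T}\langle M(t,\alpha_{t})(u-u^{*}),u-u^{*}\rangle dt\geq\langle P(0,i_{0})x,x\rangle,
\]
with equality exactly when $u=u^{*}$; this yields both the optimal feedback form \eqref{optimal} and the value function, provided one can realize $u^{*}$ as an admissible control and justify the vanishing expectation of $\mathcal{M}$.

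The main technical obstacle lies precisely in this admissibility and integrability check. Substituting \eqref{optimal} into \eqref{state} produces a closed-loop linear SDE whose drift and diffusion coefficients involve the feedback gain $M^{-1}N$, which depends on $\Lambda\in L_{\mathcal{F}^{W}}^{2,\mathrm{BMO}}$ and is therefore unbounded. I would exploit the BMO property together with Kazamaki's reverse H\"older inequality to show that the Dol\'eans-Dade exponential associated with the diffusion perturbation lies in $L^{p}$ for some $p>1$, from which well-posedness of the closed-loop equation in $L_{\mathcal{F}}^{q}(\Omega;C(0,T;\mathbb{R}^{n}))$ for some $q>2$ follows, giving $u^{*}(\cdot,\alpha_{\cdot},X^{*})\in\mathcal{U}[0,T]$. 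The same higher-moment estimates for $X$ (resp.\ $X^{*}$), combined with boundedness of $P$ and the BMO bound on $\Lambda$, control the quadratic variations of the local martingales in step one (e.g.\ $\int_{0}^{\cdot}\langle\Lambda X,X\rangle\,dW$) and upgrade them to true martingales for every admissible $u$, thereby legitimizing the expectation and closing the argument. The completion-of-squares algebra itself is routine once the generalized It\^o formula accounting for the jumps of $\alpha$ is correctly set up.
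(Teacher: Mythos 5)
Your proposal is correct and follows essentially the same route as the paper: a completion-of-squares verification via the generalized It\^o formula applied to $\langle P(t,\alpha_{t})X(t),X(t)\rangle$ (with the compensated jump martingale of $\alpha$ cancelling the $\sum_{j}q_{ij}P(t,j)$ term in the ESRE generator), combined with an admissibility check for the closed-loop state that derives $L^{q}$-moment bounds, $q>2$, from the BMO property of $\Lambda$ via Kazamaki's reverse H\"older/exponential-moment estimates, exactly as the paper does by reusing the computation in the proof of Lemma \ref{le-c}. The only cosmetic difference is that you write the residual drift directly as the perfect square $\langle M(u-u^{*}),u-u^{*}\rangle$, whereas the paper leaves it in expanded quadratic form and invokes $R+D^{\top}PD>0$.
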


It is obvious that when $D(i)\equiv0$, \eqref{condition} always holds for any $L_{\sigma}>0$. Therefore we have the following corollary.
\begin{corollary}
Under $(\mathscr{A}1)$ and $(\mathscr{A}2)$, the following ESRE:
\begin{equation}\label{Riccati-D}
\left\{
\begin{array}{l}
\begin{aligned}
dP(t,i)=&-\left[P(t,i)A(t,i)+A(t,i)^{\top}P(t,i)+C(t,i)^{\top}P(t,i)C(t,i)+\Lambda(t,i) C(t,i)\right.\\
&\quad+C(t,i)^{\top}\Lambda(t,i)+Q(t,i)+\textstyle\sum_{j=1}^{l}q_{ij}P(t,j)-\left(P(t,i)B(t,i)+S(t,i)^{\top}\right)\\
&\quad\cdot\left.R(t,i)^{-1}\left(B(t,i)^{\top}P(t,i)+S(t,i)\right)\right]dt+\Lambda(t,i) dW(t),~t\in[0,T],
\end{aligned}\\
P(T,i)=G(i),~i\in\mathcal{M},
\end{array}
\right.
\end{equation}
has a solution $\left(P(i),\Lambda(i)\right)_{i=1}^{l}$ such that $\left(P(i),\Lambda(i)\right)\in L_{\mathcal{F}^{W}}^{\infty}(0,T;\mathbb{S}^{n})\times L_{\mathcal{F}^{W}}^{2,\mathrm{BMO}}(0,T;\mathbb{S}^{n})$ for all $i\in\mathcal{M}$. And the cost functional \eqref{cost} associated with the following state equation:
\begin{equation*}
\left\{\begin{array}{l}
dX(t)=\left[A\left(t,\alpha_{t}\right)X(t)+B\left(t,\alpha_{t}\right)u(t)\right]dt+C\left(t,\alpha_{t}\right)X(t)dW(t),~t\in[0,T],\\
X(0)=x, \alpha_{0}=i_{0},
\end{array}\right.
\end{equation*}
is minimized by an optimal feedback control
\begin{equation*}
u^{*}(t,i,X)=-R(t,i)^{-1}\left(B(t,i)^{\top}P(t,i)+S(t,i)\right)X.
\end{equation*}
\end{corollary}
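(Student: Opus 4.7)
The plan is to derive the corollary as a direct specialization of Theorems \ref{th-a} and \ref{th-b} to the case $D(t,i)\equiv 0$, so no new analysis is required: the entire task is to verify that the hypotheses of Theorem \ref{th-a} are satisfied for every admissible $L_\sigma$, and then to read off the reduced forms of ESRE \eqref{Riccati} and the feedback \eqref{optimal}.

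First I would observe that condition \eqref{condition} reads $e^{-q_{ii}t}|D(t,i)R(t,i)^{-1}D(t,i)^{\top}|\leq L_{\sigma}$, and with $D(t,i)\equiv 0$ the left-hand side is identically zero. Hence \eqref{condition} holds for any positive constant, in particular for the (sufficiently small) $L_\sigma$ furnished by Theorem \ref{th-a}. Since $(\mathscr{A}1)$ and $(\mathscr{A}2)$ are assumed, Theorem \ref{th-a} directly yields a solution $(P(i),\Lambda(i))_{i=1}^{\ell}$ of ESRE \eqref{Riccati} in the class $L_{\mathcal{F}^W}^{\infty}(0,T;\mathbb{S}^n)\times L_{\mathcal{F}^W}^{2,\mathrm{BMO}}(0,T;\mathbb{S}^n)$ with $P(i)\ge 0$.

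Next I would verify that \eqref{Riccati} collapses onto \eqref{Riccati-D} when $D\equiv 0$. Inside the quadratic term, the matrix $R(t,i)+D(t,i)^{\top}P(t,i)D(t,i)$ reduces to $R(t,i)$, whose inverse exists by $(\mathscr{A}2)$; the factors $C(t,i)^{\top}P(t,i)D(t,i)$, $\Lambda(t,i)D(t,i)$, $D(t,i)^{\top}P(t,i)C(t,i)$ and $D(t,i)^{\top}\Lambda(t,i)$ all vanish. What remains is exactly the driver of \eqref{Riccati-D}, so the solution produced above solves \eqref{Riccati-D} in the stated spaces.

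Finally, applying Theorem \ref{th-b} to the same data, the controlled SDE in the corollary is just \eqref{state} with $D\equiv 0$; the optimal feedback \eqref{optimal} then simplifies, by the same vanishing terms, to $u^{*}(t,i,X)=-R(t,i)^{-1}\bigl(B(t,i)^{\top}P(t,i)+S(t,i)\bigr)X$, which is the claimed formula. Since there is no genuine obstacle in this argument, the only point deserving a line of care is noting that $R(t,i)$ alone (rather than $R(t,i)+D^\top PD$) remains uniformly invertible under $(\mathscr{A}2)$, so that the feedback law and the reduced Riccati equation are well-posed; this is immediate from $R(t,i)\ge \delta I_m$.
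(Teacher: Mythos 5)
Your proposal is correct and matches the paper's own (implicit) argument: the paper introduces the corollary precisely by remarking that when $D(t,i)\equiv 0$ the smallness condition \eqref{condition} holds trivially for any $L_{\sigma}>0$, so Theorems \ref{th-a} and \ref{th-b} apply verbatim and the Riccati equation and feedback law reduce as you describe. Your added remark that $R(t,i)\geq \delta I_m$ alone guarantees the invertibility needed in the reduced equation is a reasonable, if minor, point of care consistent with the paper's reasoning.
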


\section{Solvability of the ESRE with regime switching}
In this section, we focus on the solvability of ESRE \eqref{Riccati}. The first key ingredient of our approach is the construction of a monotone Piccard iterative sequence, where the initial element is carefully chosen by an auxiliary BSDE introduced in next subsection.
\subsection{Auxiliary backward stochastic differential equations}
First of all, we recall the following lemma from standard matrix analysis, which is a direct consequence of \cite[Theorem 7.4.1.1]{Horn and Johnson 2012}. We will use this lemma from time to time.

\begin{lemma}\label{le-a}
Let $\mathbf{A},\mathbf{B}\in\mathbb{S}^{n}$ with $\mathbf{B}$ being positive semi-definite. Then with $\lambda_{max}(\mathbf{A})$ denoting the largest eigenvalue of $\mathbf{A}$, we have
$$
tr(\mathbf{A}\mathbf{B})\leq\lambda_{max}(\mathbf{A})\cdot tr(\mathbf{B}).
$$
\end{lemma}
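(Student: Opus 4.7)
The plan is to reduce the claim to the Rayleigh-quotient characterization of the largest eigenvalue of a symmetric matrix, after diagonalizing $\mathbf{B}$. Since $\mathbf{B}\in\mathbb{S}^{n}$ is positive semi-definite, the spectral theorem provides an orthogonal matrix $U$ and eigenvalues $\mu_{1},\dots,\mu_{n}\geq 0$ such that $\mathbf{B}=U\,\mathrm{diag}(\mu_{1},\dots,\mu_{n})\,U^{\top}$. Then the cyclic property of the trace gives
\[
tr(\mathbf{A}\mathbf{B})=tr\bigl(U^{\top}\mathbf{A}U\cdot\mathrm{diag}(\mu_{1},\dots,\mu_{n})\bigr)=\sum_{i=1}^{n}\mu_{i}\,\bigl(U^{\top}\mathbf{A}U\bigr)_{ii},
\]
so the problem reduces to controlling the diagonal entries of $U^{\top}\mathbf{A}U$.

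Each such entry is the Rayleigh quotient of $\mathbf{A}$ at the unit vector $Ue_{i}$: namely $(U^{\top}\mathbf{A}U)_{ii}=(Ue_{i})^{\top}\mathbf{A}(Ue_{i})\leq \lambda_{max}(\mathbf{A})\,\|Ue_{i}\|^{2}=\lambda_{max}(\mathbf{A})$, using the variational characterization $x^{\top}\mathbf{A}x\leq\lambda_{max}(\mathbf{A})\,|x|^{2}$ valid for any symmetric $\mathbf{A}$ (this is the content of the cited [Theorem 7.4.1.1]{Horn and Johnson 2012}). Multiplying by $\mu_{i}\geq 0$ and summing over $i$ yields
\[
tr(\mathbf{A}\mathbf{B})\leq\lambda_{max}(\mathbf{A})\sum_{i=1}^{n}\mu_{i}=\lambda_{max}(\mathbf{A})\,tr(\mathbf{B}),
\]
which is the claimed inequality.

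There is no genuine obstacle, but the single point that deserves attention is where each hypothesis is used. The symmetry of $\mathbf{A}$ (with no sign requirement) is what makes the Rayleigh quotient bounded above by $\lambda_{max}(\mathbf{A})$; the positive semi-definiteness of $\mathbf{B}$, equivalently $\mu_{i}\geq 0$, is what permits inserting the pointwise bound on $(U^{\top}\mathbf{A}U)_{ii}$ inside the $\mu_{i}$-weighted sum while preserving the direction of the inequality. A cosmetic alternative is to avoid diagonalization and instead write $\mathbf{B}=\mathbf{B}^{1/2}\mathbf{B}^{1/2}$ via the symmetric square root, so that $tr(\mathbf{A}\mathbf{B})=tr(\mathbf{B}^{1/2}\mathbf{A}\mathbf{B}^{1/2})=\sum_{i}b_{i}^{\top}\mathbf{A}b_{i}$ where $b_{i}$ are the columns of $\mathbf{B}^{1/2}$, and then bound each term by $\lambda_{max}(\mathbf{A})|b_{i}|^{2}$; this realizes the same argument without choosing an eigenbasis explicitly.
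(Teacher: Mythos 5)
Your proof is correct. The paper itself gives no argument for this lemma --- it simply declares it ``a direct consequence of [Theorem 7.4.1.1]'' of Horn and Johnson --- so your write-up actually supplies the elementary proof the paper omits: diagonalize $\mathbf{B}=U\,\mathrm{diag}(\mu_{1},\dots,\mu_{n})\,U^{\top}$ with $\mu_{i}\geq 0$, use the cyclic property of the trace to get $tr(\mathbf{A}\mathbf{B})=\sum_{i}\mu_{i}(Ue_{i})^{\top}\mathbf{A}(Ue_{i})$, and bound each Rayleigh quotient by $\lambda_{max}(\mathbf{A})$. The only quibble is attributional, not mathematical: the variational bound $x^{\top}\mathbf{A}x\leq\lambda_{max}(\mathbf{A})|x|^{2}$ that you invoke is the Rayleigh--Ritz theorem (Theorem 4.2.2 in Horn--Johnson), whereas 7.4.1.1 is a trace inequality in the singular-value chapter; either way your argument is self-contained and does not actually need the citation. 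Your alternative via $\mathbf{B}^{1/2}$ is equally valid and arguably cleaner since it avoids choosing an eigenbasis.
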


Now we consider a linear backward stochastic differential equation with regime switching:
\begin{equation}\label{P0}
\left\{
\begin{array}{l}
\begin{aligned}dP_{0}(t,i)=-[&P_{0}(t,i)A(t,i)+A(t,i)^{\top}P_{0}(t,i)+C(t,i)^{\top}P_{0}(t,i)C(t,i)+\Lambda_{0}(t,i)C(t,i)\\
&+C(t,i)^{\top}\Lambda_{0}(t,i)+Q(t,i)+\textstyle\sum_{j=1}^{l}q_{ij}P_{0}(t,j)]dt+\Lambda_{0}(t,i)dW(t),~t\in[0,T],
\end{aligned}\\
P_{0}(T,i)=G(i),~i\in\mathcal{M}.
\end{array}
\right.
\end{equation}
For simplicity, throughout this paper, we denote
\begin{equation}\label{notation}
\begin{aligned}
&\Pi(t,i,p,\lambda)=p(t,i)A(t,i)+A(t,i)^{\top}p(t,i)+C(t,i)^{\top}p(t,i)C(t,i)+\lambda(t,i)C(t,i)+C(t,i)^{\top}\lambda(t,i),\\
&H(t,i,p,\lambda,R,S)=-\left(p(t,i)B(t,i)+C(t,i)^{\top}p(t,i)D(t,i)+\lambda(t,i)D(t,i)+S(t,i)^{\top}\right)\\
&\quad\cdot\left(R(t,i)+D(t,i)^{\top}p(t,i)D(t,i)\right)^{-1}\left(B(t,i)^{\top}p(t,i)+D(t,i)^{\top}p(t,i)C(t,i)+D(t,i)^{\top}\lambda(t,i)+S(t,i)\right),\\
&\qquad\qquad\qquad\qquad\qquad\qquad\qquad\qquad\qquad\qquad\qquad\qquad\qquad\qquad\qquad\qquad\qquad\qquad i\in\mathcal{M},~t\in[0,T].
\end{aligned}
\end{equation}
Under $(\mathscr{A}1)$, there exists a constant $K>0$ such that for $i,j\in\mathcal{M},t\in[0,T]$,
\begin{equation}\label{K}
|\Pi(t,i,p,\lambda)|\leq K|p(t,i)|+K|\lambda(t,i)|,\quad|Q(t,i)|\leq K,\quad|G(i)|\leq K,
\end{equation}
\begin{equation}\label{K'}
|q_{ij}e^{(q_{ii}-q_{jj})t}|\leq K.
\end{equation}
We first give a priori estimate for the solution of BSDE \eqref{P0}.
\begin{lemma}\label{le-estimate}
  Assume that  $(\mathscr{A}1)$ holds and BSDE \eqref{P0} adimits a solution $(P_{0}(i),\Lambda_{0}(i))_{i=1}^{l}$ such that $(P_{0}(i),\Lambda_{0}(i))\in L_{\mathcal{F}^{W}}^{\infty}(0,T;\mathbb{S}^{n})\times L_{\mathcal{F}^{W}}^{2,\mathrm{BMO}}(0,T;\mathbb{S}^{n})$ for $i\in\mathcal{M}$, then it holds that all $i\in\mathcal{M}$
  \begin{align*}
    &\|e^{\frac{1}{2}\rho \cdot}\widetilde{P}_0(\cdot,i)\|^2_{L_{\mathcal{F}^{W}}^{\infty}(0,T;\mathbb{S}^{n})}\leq\frac{3}{2}e^{\rho T}(K^{2}+\frac{1}{\rho}),\\
& \|e^{\frac{1}{2}\rho \cdot}\widetilde{\Lambda}_0(\cdot,i)\|^2_{L_{\mathcal{F}^{W}}^{2,\mathrm{BMO}}(0,T;\mathbb{S}^{n})}\leq3e^{\rho T}(K^{2}+\frac{1}{\rho}),
  \end{align*}
  where $\rho=\left(3(l-1)^{2}T+3\right)K^{2}+2K$,   $\widetilde{P}_{0}(t,i)=e^{q_{ii}t}P_{0}(t,i)$, and $\widetilde{\Lambda}_{0}(t,i)=e^{q_{ii}t}\Lambda_{0}(t,i)$ for all $t\in[0,T]$.
  \end{lemma}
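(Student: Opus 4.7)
The plan is to transform the BSDE so as to absorb the diagonal regime-coupling, apply Itô's formula to a suitably weighted square of the solution, and close the resulting inequality by a bootstrap on the a priori $L^\infty$-bound. First I will set $\widetilde{P}_0(t,i) = e^{q_{ii}t}P_0(t,i)$ and $\widetilde{\Lambda}_0(t,i) = e^{q_{ii}t}\Lambda_0(t,i)$; a direct Itô calculation shows that the $q_{ii}\widetilde{P}_0(t,i)\,dt$ produced by the weight cancels exactly the $j=i$ term in $e^{q_{ii}t}\sum_{j=1}^{\ell}q_{ij}P_0(t,j)$, so that $(\widetilde{P}_0(\cdot,i),\widetilde{\Lambda}_0(\cdot,i))$ satisfies a BSDE whose driver involves only the off-diagonal coupling $\sum_{j\neq i}q_{ij}e^{(q_{ii}-q_{jj})t}\widetilde{P}_0(t,j)$, the transformed source $e^{q_{ii}t}Q(t,i)$, and $\Pi(t,i,\widetilde{P}_0,\widetilde{\Lambda}_0)$, with terminal value $e^{q_{ii}T}G(i)$ of modulus at most $K$ since $q_{ii}\leq 0$.

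Next I will apply Itô's formula to $e^{\rho t}|\widetilde{P}_0(t,i)|^2$ on $[\tau,T]$ for an arbitrary $\{\mathcal{F}^W_t\}$-stopping time $\tau\leq T$ and take $\mathbb{E}_\tau[\cdot]$; since $\widetilde{P}_0(\cdot,i)\in L^\infty$ and $\widetilde{\Lambda}_0(\cdot,i)$ is BMO, the stochastic integral is a true martingale and its conditional expectation vanishes, producing a weighted energy identity. I will then bound the cross-term $2\langle\widetilde{P}_0,F_0\rangle$ piecewise via $2ab\leq\varepsilon a^2+b^2/\varepsilon$ with parameters tuned to match the stated $\rho$: using \eqref{K} for $\Pi$ gives $2\langle\widetilde{P}_0,\Pi\rangle\leq(2K^2+2K)|\widetilde{P}_0|^2+\tfrac12|\widetilde{\Lambda}_0|^2$; the source term with $\varepsilon_Q=K^2$ yields $2\langle\widetilde{P}_0,e^{q_{ii}s}Q\rangle\leq K^2|\widetilde{P}_0|^2+1$; and the coupling term with $\varepsilon_c=3(\ell-1)K^2T$ yields
$$2\Bigl|\bigl\langle\widetilde{P}_0(s,i),\textstyle\sum_{j\neq i}q_{ij}e^{(q_{ii}-q_{jj})s}\widetilde{P}_0(s,j)\bigr\rangle\Bigr|\leq 3(\ell-1)^2K^2T\,|\widetilde{P}_0(s,i)|^2+\frac{1}{3(\ell-1)T}\sum_{j\neq i}|\widetilde{P}_0(s,j)|^2.$$
Adding the three bounds produces a coefficient of exactly $\rho=(3(\ell-1)^2T+3)K^2+2K$ on $|\widetilde{P}_0(s,i)|^2$, which cancels on the left, and a coefficient $\tfrac12$ on $|\widetilde{\Lambda}_0(s,i)|^2$, leaving half of it to control.

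Finally I will bootstrap. Setting $M:=\max_{j\in\mathcal{M}}\|e^{\rho\cdot/2}\widetilde{P}_0(\cdot,j)\|_{L^\infty}^2$, which is finite by the standing hypothesis $P_0\in L^\infty$, I bound $\int_\tau^T e^{\rho s}ds\leq e^{\rho T}/\rho$ and $\int_\tau^T e^{\rho s}\sum_{j\neq i}|\widetilde{P}_0(s,j)|^2 ds\leq(\ell-1)MT$, obtaining
$$e^{\rho\tau}|\widetilde{P}_0(\tau,i)|^2+\tfrac12\mathbb{E}_\tau\!\int_\tau^T\!e^{\rho s}|\widetilde{\Lambda}_0(s,i)|^2 ds\leq e^{\rho T}(K^2+1/\rho)+M/3.$$
Taking $\tau=t$, the essential supremum in $(t,\omega)$, and the maximum over $i$ turns the left side into $M$, yielding $M\leq\tfrac32 e^{\rho T}(K^2+1/\rho)$; substituting this back into the displayed inequality gives the BMO bound $\mathbb{E}_\tau\!\int_\tau^T\!e^{\rho s}|\widetilde{\Lambda}_0(s,i)|^2 ds\leq 3e^{\rho T}(K^2+1/\rho)$, uniformly in the stopping time $\tau$. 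The main obstacle is calibrating the three Young parameters so that the total coefficient on $|\widetilde{P}_0(s,i)|^2$ equals $\rho$ exactly (leaving nothing to absorb) while the residual coupling integrates to $M/3$ rather than anything $\geq M$; the choices $\varepsilon_Q=K^2$ and $\varepsilon_c=3(\ell-1)K^2T$ are what force the clean constants $\tfrac32$ and $3$ in the final estimate.
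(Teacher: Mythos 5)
Your proof is correct and follows essentially the same route as the paper: the exponential transformation that absorbs the diagonal term of the generator, It\^o's formula applied to $e^{\rho t}|\widetilde{P}_0(t,i)|^2$ with Young's inequality calibrated so that the coefficient of $|\widetilde{P}_0(s,i)|^2$ equals exactly $\rho$, and a self-bounding treatment of the off-diagonal coupling. The only (beneficial) deviations are that you close the system by taking the maximum over $i$ rather than summing over $i$ --- which yields the per-state constant $\tfrac32$ directly, whereas the paper's summation only gives the bound $\tfrac{3l}{2}$ on the sum --- and that you run the estimate at an arbitrary stopping time $\tau$, which is what the BMO norm of $\widetilde{\Lambda}_0$ actually requires.
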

  \begin{proof}
    Let $\widetilde{P}_{0}(t,i)=e^{q_{ii}t}P_{0}(t,i)$, and $\widetilde{\Lambda}_{0}(t,i)=e^{q_{ii}t}\Lambda_{0}(t,i)$ for all $i\in\mathcal{M}$ and $t\in[0,T]$. It is obvious that $(\widetilde{P}_{0}(i),\widetilde{\Lambda}_{0}(i))_{i=1}^{l}$ satisfies
    \begin{equation*}
    \left\{
    \begin{array}{l}
    \begin{aligned}
    d\widetilde{P}_{0}(t,i)=&-[\Pi(t,i,\widetilde{P}_{0},\widetilde{\Lambda}_{0})+\widetilde{Q}(t,i)+\textstyle\sum_{j\neq i}q_{ij}e^{(q_{ii}-q_{jj})t}\widetilde{P}_{0}(t,j)]dt+\widetilde{\Lambda}_{0}(t,i)dW(t),~t\in[0,T],
    \end{aligned}\\
    \widetilde{P}_{0}(T,i)=\widetilde{G}(i),~i\in\mathcal{M}.
    \end{array}
    \right.
    \end{equation*} 
Applying It\^o's formula to $e^{\rho t}|\widetilde{P}_{0}(t,i)|^{2}$ for a constant $\rho$ yields
    \begin{equation}\label{rho}
    \begin{aligned}
    e^{\rho t}|\widetilde{P}_{0}(t,i)|^{2}=&e^{\rho T}|\widetilde{G}(i)|^{2}+\int_{t}^{T}e^{\rho s}\left[-\rho|\widetilde{P}_{0}(s,i)|^{2}+2\langle \widetilde{P}_{0}(s,i),\Pi(s,i,\widetilde{P}_{0},\widetilde{\Lambda}_{0})+\widetilde{Q}(s,i)\right.\\
    &\left.+\textstyle\sum_{j\neq i}q_{ij}e^{(q_{ii}-q_{jj})s}\widetilde{P}_{0}(s,j)\rangle-|\widetilde{\Lambda}_{0}(s,i)|^{2}\right]ds-2\int_{t}^{T}e^{\rho s}\langle \widetilde{P}_{0}(s,i),\widetilde{\Lambda}_{0}(s,i)\rangle dW(s).
    \end{aligned}
    \end{equation}
    Taking conditional expectation, it holds that
    \begin{align*}
    &e^{\rho t}|\widetilde{P}_{0}(t,i)|^{2}+\rho \mathbb{E}_{t}\left[\int_{t}^{T}e^{\rho s}|\widetilde{P}_{0}(s,i)|^{2}ds\right]+\mathbb{E}_{t}\left[\int_{t}^{T}e^{\rho s}|\widetilde{\Lambda}_{0}(s,i)|^{2}ds\right]\\
    \leq &\mathbb{E}_{t}\left[e^{\rho T}|\widetilde{G}(i)|^{2}\right]+2K\mathbb{E}_{t}\left[\int_{t}^{T}e^{\rho s}|\widetilde{P}_{0}(s,i)|\left(|\widetilde{P}_{0}(s,i)|+|\widetilde{\Lambda}_{0}(s,i)|+1+\textstyle\sum_{j\neq i}|\widetilde{P}_{0}(s,j)|\right)ds\right]\\
    \leq &\mathbb{E}_{t}\left[e^{\rho T}|\widetilde{G}(i)|^{2}\right]+2K\mathbb{E}_{t}\left[\int_{t}^{T}e^{\rho s}|\widetilde{P}_{0}(s,i)|^{2}ds\right]+2K^{2}\mathbb{E}_{t}\left[\int_{t}^{T}e^{\rho s}|\widetilde{P}_{0}(s,i)|^{2}ds\right]\\
    &+\frac{1}{2}\mathbb{E}_{t}\left[\int_{t}^{T}e^{\rho s}|\widetilde{\Lambda}_{0}(s,i)|^{2}ds\right]+K^{2}\mathbb{E}_{t}\left[\int_{t}^{T}e^{\rho s}|\widetilde{P}_{0}(s,i)|^{2}ds\right]+\mathbb{E}_{t}\left[\int_{t}^{T}e^{\rho s}ds\right]\\
    &+3K^{2}(l-1)^{2}T\mathbb{E}_{t}\left[\int_{t}^{T}e^{\rho s}|\widetilde{P}_{0}(s,i)|^{2}ds\right]+\frac{1}{3(l-1)T}\textstyle\sum_{j\neq i}\mathbb{E}_{t}\left[\int_{t}^{T}e^{\rho s}|\widetilde{P}_{0}(s,j)|^{2}ds\right]\\
    \leq &e^{\rho T}(K^{2}+\frac{1}{\rho})+\frac{1}{3(l-1)T}\textstyle\sum_{j\neq i}\mathbb{E}_{t}\left[\int_{t}^{T}e^{\rho s}|\widetilde{P}_{0}(s,j)|^{2}ds\right]\\
    &+\left(\left(3(l-1)^{2}T+3\right)K^{2}+2K\right)\mathbb{E}_{t}\left[\int_{t}^{T}e^{\rho s}|\widetilde{P}_{0}(s,i)|^{2}ds\right]+\frac{1}{2}\mathbb{E}_{t}\left[\int_{t}^{T}e^{\rho s}|\widetilde{\Lambda}_{0}(s,i)|^{2}ds\right].
    \end{align*}
    Taking $\rho=\left(3(l-1)^{2}T+3\right)K^{2}+2K$, we get 
    \begin{align*}
      &e^{\rho t}|\widetilde{P}_{0}(t,i)|^{2}+\frac{1}{2}\mathbb{E}_{t}\left[\int_{t}^{T}e^{\rho s}|\widetilde{\Lambda}_{0}(s,i)|^{2}ds\right]\\
      \leq &e^{\rho T}(K^{2}+\frac{1}{\rho})+\frac{1}{3(l-1)T}\textstyle\sum_{j\neq i}\mathbb{E}_{t}\left[\int_{t}^{T}e^{\rho s}|\widetilde{P}_{0}(s,j)|^{2}ds\right],
      \end{align*}
    which implies 
    \begin{equation*}
      \|e^{\frac{1}{2}\rho \cdot}\widetilde{P}_0(\cdot,i)\|^2_{L_{\mathcal{F}^{W}}^{\infty}(0,T;\mathbb{S}^{n})}\leq e^{\rho T}(K^{2}+\frac{1}{\rho})+\frac{1}{3(l-1)}\textstyle\sum_{j\neq i}\|e^{\frac{1}{2}\rho \cdot}\widetilde{P}_0(\cdot,j)\|^2_{L_{\mathcal{F}^{W}}^{\infty}(0,T;\mathbb{S}^{n})}.
    \end{equation*}
    Hence, we obtain that
    \begin{equation*}
      \sum_{i=1}^l\|e^{\frac{1}{2}\rho \cdot}\widetilde{P}_0(\cdot,i)\|^2_{L_{\mathcal{F}^{W}}^{\infty}(0,T;\mathbb{S}^{n})}\leq \frac{3l}{2}e^{\rho T}(K^{2}+\frac{1}{\rho}),
    \end{equation*}
    and for all $i\in\mathcal{M}$,
    \begin{equation*}
      \|e^{\frac{1}{2}\rho \cdot}\widetilde{P}_0(\cdot,i)\|^2_{L_{\mathcal{F}^{W}}^{\infty}(0,T;\mathbb{S}^{n})}\leq \frac{3}{2}e^{\rho T}(K^{2}+\frac{1}{\rho}).
    \end{equation*}
    As a byproduct, we further have for all $i\in\mathcal{M}$,
    \begin{equation*}
      \|e^{\frac{1}{2}\rho \cdot}\widetilde{\Lambda}_0(\cdot,i)\|^2_{L_{\mathcal{F}^{W}}^{2,\mathrm{BMO}}(0,T;\mathbb{S}^{n})}\leq3e^{\rho T}(K^{2}+\frac{1}{\rho}).
    \end{equation*}
    \end{proof}

The following lemma provides the solvability result for BSDE \eqref{P0}.

\begin{lemma}\label{le-b}
Let $(\mathscr{A}1)$ holds and $Q(t,i),G(i)\geq0$ for $i\in\mathcal{M},t\in[0,T]$. Then BSDE \eqref{P0} adimits a unique solution $(P_{0}(i),\Lambda_{0}(i))_{i=1}^{l}$ such that $(P_{0}(i),\Lambda_{0}(i))_{i=1}^{l}\in L_{\mathcal{F}^{W}}^{\infty}(0,T;\mathbb{S}^{n})\times L_{\mathcal{F}^{W}}^{2,\mathrm{BMO}}(0,T;\mathbb{S}^{n})$ and $P_{0}(i)\geq0$ for $i\in\mathcal{M}$.
\end{lemma}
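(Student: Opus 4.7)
The plan is to proceed in three stages: (i) solve BSDE \eqref{P0} in the standard $L^{2}$ sense, (ii) upgrade the regularity to the target class via the already proven Lemma \ref{le-estimate}, and (iii) establish positivity through a probabilistic representation. For (i), I would stack the $\ell$ components into a single vector-valued BSDE on $[0,T]$ with unknown $\mathbf{P}_{0} := (P_{0}(\cdot,1),\dots,P_{0}(\cdot,\ell))$ and $\mathbf{\Lambda}_{0} := (\Lambda_{0}(\cdot,1),\dots,\Lambda_{0}(\cdot,\ell))$ taking values in $(\mathbb{S}^{n})^{\ell}$. Under $(\mathscr{A}1)$ the generator is jointly linear in $(\mathbf{P}_{0},\mathbf{\Lambda}_{0})$ with bounded coefficients (cf.~\eqref{K} and \eqref{K'}) and hence uniformly Lipschitz, while the terminal $(G(1),\dots,G(\ell))$ is essentially bounded. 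The standard Pardoux--Peng theorem therefore yields a unique solution in $L_{\mathcal{F}^{W}}^{2}(\Omega;C(0,T;(\mathbb{S}^{n})^{\ell})) \times L_{\mathcal{F}^{W}}^{2}(0,T;(\mathbb{S}^{n})^{\ell})$; symmetry of each $P_{0}(i)$ is inherited from symmetry of $G(i)$ and of the generator by uniqueness. Applying Lemma \ref{le-estimate} to this solution immediately upgrades the regularity to $P_{0}(i)\in L_{\mathcal{F}^{W}}^{\infty}(0,T;\mathbb{S}^{n})$ and $\Lambda_{0}(i)\in L_{\mathcal{F}^{W}}^{2,\mathrm{BMO}}(0,T;\mathbb{S}^{n})$ for every $i\in\mathcal{M}$.

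For the positivity $P_{0}(t,i)\geq0$, I would derive a Feynman--Kac-type representation. Fix $(t,i,x)\in[0,T]\times\mathcal{M}\times\mathbb{R}^{n}$ and let $X(\cdot)$ solve the uncontrolled linear SDE on $[t,T]$
\begin{equation*}
dX(s) = A(s,\alpha_{s})X(s)ds + C(s,\alpha_{s})X(s)dW(s), \qquad X(t)=x,\ \alpha_{t}=i.
\end{equation*}
Apply It\^o's formula to $Y(s):=X(s)^{\top}P_{0}(s,\alpha_{s})X(s)$. The process $P_{0}(\cdot,\alpha_{\cdot})$ is an $\{\mathcal{F}_{t}\}$-semimartingale whose finite-variation component combines the BSDE drift at state $\alpha_{s}$ with the compensator $\sum_{j} q_{\alpha_{s-}j}P_{0}(s,j)ds$ of the Markov-chain jumps (using $\sum_{j}q_{ij}=0$). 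Crucially, this compensator cancels the coupling term $\sum_{j}q_{ij}P_{0}(s,j)$ appearing in the BSDE drift of $P_{0}(\cdot,i)$, after which the remaining $\Pi$-type terms cancel against the cross variations between $X$ and $P_{0}$ in the standard quadratic-form calculation, leaving
\begin{equation*}
dY(s) = -X(s)^{\top}Q(s,\alpha_{s})X(s)ds + dM(s),
\end{equation*}
where $M$ collects a Brownian stochastic integral and a compensated jump martingale. Taking $\mathcal{F}_{t}$-conditional expectation and using $Y(T)=X(T)^{\top}G(\alpha_{T})X(T)$ yields
\begin{equation*}
x^{\top}P_{0}(t,i)x = \mathbb{E}_{t}\!\left[X(T)^{\top}G(\alpha_{T})X(T)+\int_{t}^{T}X(s)^{\top}Q(s,\alpha_{s})X(s)\,ds\right],
\end{equation*}
which is non-negative by the hypothesis $Q,G\geq0$. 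Since $x$ is arbitrary, $P_{0}(t,i)\geq0$ for all $i\in\mathcal{M}$.

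The main obstacle I anticipate is the rigorous application of It\^o's formula to $P_{0}(s,\alpha_{s})$: the factor $P_{0}(\cdot,j)$ is only $\{\mathcal{F}_{t}^{W}\}$-adapted whereas $\alpha$ lives on an independent pure-jump source, so one must invoke an It\^o formula for semimartingales carrying both a continuous Brownian component and a Markov-chain jump component, and track the compensator bookkeeping carefully. Moreover, the representation step requires $M$ to be a true martingale, which I would justify from the uniform $L^{\infty}$ bound on $P_{0}(\cdot,j)$ and the BMO property of $\Lambda_{0}(\cdot,j)$ established in step (i), together with the standard moment estimate $\mathbb{E}\sup_{s\in[t,T]}|X(s)|^{4}<\infty$ for the linear SDE under $(\mathscr{A}1)$.
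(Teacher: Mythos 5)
Your overall strategy --- Pardoux--Peng for the stacked linear system, then a Feynman--Kac representation along the regime-switching chain for positivity --- is genuinely different from the paper's. The paper instead freezes the off-diagonal coupling $\sum_{j\neq i}q_{ij}e^{(q_{ii}-q_{jj})t}\widetilde p_{0}(t,j)$, solves each decoupled matrix BSDE by Peng's Theorem 5.1 (which already delivers a solution in $L^{\infty}\times \mathrm{BMO}$ with $\widetilde P_{0}(i)\geq 0$, because the transformation $\widetilde P_{0}(t,i)=e^{q_{ii}t}P_{0}(t,i)$ leaves only the off-diagonal coefficients $q_{ij}\geq 0$, so the frozen source term stays positive semi-definite), and closes with a contraction in the weighted norm $\bigl\|\sup_{t}e^{\rho t}|\cdot|^{2}\bigr\|_{\infty}$; positivity is inherited along the Picard iterates. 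Your representation argument for positivity is sound and is essentially the same computation the paper performs in Step~2 of the proof of Theorem~\ref{th-b} with $u\equiv 0$: the compensator of the chain's jumps cancels $\sum_{j}q_{ij}P_{0}(t,j)$ and the $\Pi$-terms cancel against the quadratic covariations, leaving drift $-X^{\top}QX$.

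The genuine gap is in your step (ii). Lemma~\ref{le-estimate} is an \emph{a priori} estimate: its hypothesis is that the solution already lies in $L_{\mathcal{F}^{W}}^{\infty}\times L_{\mathcal{F}^{W}}^{2,\mathrm{BMO}}$, and its proof closes only because the bound it derives has the form $a_{i}\leq c+\tfrac{1}{3(l-1)}\sum_{j\neq i}a_{j}$ with $a_{j}=\|e^{\frac{1}{2}\rho\cdot}\widetilde P_{0}(\cdot,j)\|^{2}_{L^{\infty}}$ known in advance to be finite. Applied to the Pardoux--Peng solution, which is a priori only in $L^{2}(\Omega;C)\times L^{2}$, that inequality is vacuous, so it cannot ``immediately upgrade'' the regularity; this is exactly why the paper constructs the solution inside $L^{\infty}$ from the outset. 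The circularity is compounded in step (iii), where you invoke the $L^{\infty}$/BMO bounds you have not yet established to justify the true-martingale property. The repair is to reverse the order: derive the representation $x^{\top}P_{0}(t,i)x=\mathbb{E}_{t}\bigl[X(T)^{\top}G(\alpha_{T})X(T)+\int_{t}^{T}X(s)^{\top}Q(s,\alpha_{s})X(s)\,ds\bigr]$ first, using only the $L^{2}$ data --- by BDG and Cauchy--Schwarz one has, e.g., $\mathbb{E}\bigl[(\int_{t}^{T}|X^{\top}\Lambda_{0}X|^{2}ds)^{1/2}\bigr]\leq \mathbb{E}[\sup_{s}|X(s)|^{4}]^{1/2}\,\mathbb{E}[\int_{t}^{T}|\Lambda_{0}|^{2}ds]^{1/2}<\infty$, and similarly for the compensated jump integral using $P_{0}\in L^{2}(\Omega;C)$, so the local martingale is already a true martingale. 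Then the conditional estimate $\mathbb{E}_{t}[\sup_{s\in[t,T]}|X(s)|^{2}]\leq C|x|^{2}$ with a deterministic $C$ yields simultaneously $P_{0}(t,i)\geq 0$ and $|P_{0}(t,i)|\leq C$, after which the It\^o computation of Lemma~\ref{le-estimate} legitimately delivers the BMO bound on $\Lambda_{0}$. With that reordering your proof is complete and arguably more direct than the paper's.
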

\begin{proof}
Given $\widetilde{p}_{0}(j)\in L_{\mathcal{F}^{W}}^{\infty}(0,T;\mathbb{S}^{n})$ satisfying $\widetilde{p}_{0}(j)\geq0$ for all $j\in\mathcal{M}$, it follows from \cite[Theorem 5.1]{Peng 1992} and \cite{Hu et al. 2022a} that the following BSDE
\begin{equation}\label{map}
\left\{
\begin{array}{l}
\begin{aligned}
d\widetilde{P}_{0}(t,i)=&-[\Pi(t,i,\widetilde{P}_{0},\widetilde{\Lambda}_{0})+\widetilde{Q}(t,i)+\textstyle\sum_{j\neq i}q_{ij}e^{(q_{ii}-q_{jj})t}\widetilde{p}_{0}(t,j)]dt+\widetilde{\Lambda}_{0}(t,i)dW(t),~t\in[0,T],
\end{aligned}\\
\widetilde{P}_{0}(T,i)=\widetilde{G}(i),~i\in\mathcal{M},
\end{array}
\right.
\end{equation} 
with
\begin{equation*}
\widetilde{Q}(t,i)=e^{q_{ii}t}Q(t,i),\quad\widetilde{G}(i)=e^{q_{ii}T}G(i),
\end{equation*}
admits a unique solution $(\widetilde{P}_{0}(i),\widetilde{\Lambda}_{0}(i))_{i=1}^{l}$ such that $(\widetilde{P}_{0}(i),\widetilde{\Lambda}_{0}(i))_{i=1}^{l}\in L_{\mathcal{F}^{W}}^{\infty}(0,T;\mathbb{S}^{n})\times L_{\mathcal{F}^{W}}^{2,\mathrm{BMO}}(0,T;\mathbb{S}^{n})$ and $\widetilde{P}_{0}(i)\geq0,~i\in\mathcal{M}$. For $\underline{p}_{0},\overline{p}_{0}\in L_{\mathcal{F}^{W}}^{\infty}(0,T;\mathbb{S}^{n})$ satisfying $\underline{p}_{0},\overline{p}_{0}\geq0$, let $(\underline{P}_{0},\underline{\Lambda}_{0})$ (resp. $(\overline{P}_{0},\overline{\Lambda}_{0})$) be the unique solution of BSDE \eqref{map} corresponding to $\underline{p}_{0}$ (resp. $\overline{p}_{0}$). We denote
$$
\begin{aligned}
&\Delta p_{0}(t,i)=\underline{p}_{0}(t,i)-\overline{p}_{0}(t,i),\quad\Delta\lambda_{0}(t,i)=\underline{\lambda}_{0}(t,i)-\overline{\lambda}_{0}(t,i),\\
&\Delta P_{0}(t,i)=\underline{P}_{0}(t,i)-\overline{P}_{0}(t,i),\quad\Delta\Lambda_{0}(t,i)=\underline{\Lambda}_{0}(t,i)-\overline{\Lambda}_{0}(t,i),\qquad t\in[0,T],~i\in\mathcal{M}.
\end{aligned}
$$
Applying It\^o's formula to $e^{\rho t}|\Delta P_{0}(t,i)|^{2}$ for a constant $\rho$ and taking conditional expectation, we have
\begin{align*}
&e^{\rho t}|\Delta P_{0}(t,i)|^{2}+\rho \mathbb{E}_{t}\left[\int_{t}^{T}e^{\rho s}|\Delta P_{0}(s,i)|^{2}ds\right]+\mathbb{E}_{t}\left[\int_{t}^{T}e^{\rho s}|\Delta\Lambda_{0}(s,i)|^{2}ds\right]\\
=&2\mathbb{E}_{t}\left[\int_{t}^{T}e^{\rho s}\langle\Delta P_{0}(s,i),\Pi(s,i,\Delta P_{0},\Delta\Lambda_{0})\rangle+\sum_{j\neq i}q_{ij}e^{(q_{ii}-q_{jj})s}\Delta\widetilde{p}_{0}(s,j) ds\right]\\
\leq &2K\mathbb{E}_{t}\left[\int_{t}^{T}e^{\rho s}|\Delta P_{0}(s,i)|^{2}ds\right]+2K^{2}\mathbb{E}_{t}\left[\int_{t}^{T}e^{\rho s}|\Delta P_{0}(s,i)|^{2}ds\right]+\frac{1}{2}\mathbb{E}_{t}\left[\int_{t}^{T}e^{\rho s}|\Delta\Lambda_{0}(s,i)|^{2}ds\right]\\
&+3K^{2}(l-1)^{2}T\mathbb{E}_{t}\left[\int_{t}^{T}e^{\rho s}|\Delta P_{0}(s,i)|^{2}ds\right]+\frac{1}{3(l-1)T}\sum_{j\neq i}\mathbb{E}_{t}\left[\int_{t}^{T}e^{\rho s}|\Delta p_{0}(s,j)|^{2}ds\right]\\
\leq &\left\{[3(l-1)^{2}T+2]K^{2}+2K\right\}\mathbb{E}_{t}\left[\int_{t}^{T}e^{\rho s}|{P}_{0}(s,i)|^{2}ds\right]+\frac{1}{2}\mathbb{E}_{t}\left[\int_{t}^{T}e^{\rho s}|\Delta\Lambda_{0}(s,i)|^{2}ds\right]\\
&+\frac{1}{3(l-1)}\sum_{j\neq i}\left\|\sup\limits_{s\in[t,T]}e^{\rho s}|\Delta p(s,i)|^{2}\right\|_{\infty},
\end{align*}
which implies
\begin{align*}
\left\|\sup\limits_{t\in[0,T]}e^{\rho t}|\Delta P_{0}(t,i)|^{2}\right\|_{\infty}
\leq\frac{1}{3(l-1)}\sum_{j\neq i}\left\|\sup\limits_{t\in[0,T]}e^{\rho t}|\Delta p_{0}(t,j)|^{2}\right\|_{\infty}.
\end{align*}
Thus, it holds that
\begin{align*}
\sum_{i=1}^{l}\left\|\sup\limits_{t\in[0,T]}e^{\rho t}|\Delta P_{0}(t,i)|^{2}\right\|_{\infty}\leq\frac{1}{3}\sum_{i=1}^{l}\left\|\sup\limits_{t\in[0,T]}e^{\rho t}|\Delta p_{0}(t,i)|^{2}\right\|_{\infty}.
\end{align*}
Hence, by a standard contraction mapping method, the following BSDE
\begin{equation}\label{overlineP0}
\left\{
\begin{array}{l}
d\widetilde{P}_{0}(t,i)=-[\Pi(t,i,\widetilde{P}_{0},\widetilde{\Lambda}_{0})+\widetilde{Q}(t,i)+\textstyle\sum_{j\neq i}q_{ij}\widetilde{P}_{0}(t,j)]dt+\widetilde{\Lambda}_{0}(t,i)dW(t),~t\in[0,T],\\
\widetilde{P}_{0}(T,i)=\widetilde{G}(i),~i\in\mathcal{M}.
\end{array}
\right.
\end{equation}
admit a unique solution $(\widetilde{P}_{0}(i),\widetilde{\Lambda}_{0}(i))_{i=1}^{l}$ such that $(\widetilde{P}_{0}(i),\widetilde{\Lambda}_{0}(i))\in L_{\mathcal{F}^{W}}^{\infty}(0,T;\mathbb{S}^{n})\times L_{\mathcal{F}^{W}}^{2,\mathrm{BMO}}(0,T;\mathbb{S}^{n})$ and $\tilde{P}(i)\geq 0$ for all $i\in\mathcal{M}$. Therefore, noting the following transform for all $i\in\mathcal{M}$ and $t\in[0,T]$,  
$$
P_{0}(t,i)=e^{-q_{ii}t}\widetilde{P}_{0}(t,i),\quad \Lambda_{0}(t,i)=e^{-q_{ii}t}\widetilde{\Lambda}_{0}(t,i),
$$
the proof is complete.
\end{proof}
\begin{remark}
We remark that the condition that $Q(t,i),G(i)\geq0$ for $i\in\mathcal{M},t\in[0,T]$ is only used to get $P_{0}(i)\geq0$ for $i\in\mathcal{M}$ in Lemma \ref*{le-b}. Moreover, we will get $P_{0}(i)\leq0$ for $i\in\mathcal{M}$ if $Q(t,i),G(i)\leq0$ for $i\in\mathcal{M},t\in[0,T]$.
\end{remark}

Even though the second part in Lemma \ref{le-b} can be regarded as a comparison result for matrix-valued BSDEs, we can not use it to show the monotonicity for our constructed sequence. This is because the regularity of the coefficients needed in Lemma \ref{le-b} is too strong to be satisfied in our situation. Therefore, we introduce another auxiliary BSDE and provide a comparison result in Lemma \ref{le-c}. Our result is inspired by and a slight extention of \cite[Lemma 5.2]{Peng 1992}.

We consider the following stochastic Riccati equation:
\begin{equation}\label{comparison}
\left\{
\begin{array}{l}
\begin{aligned}dM(t)=-[&M(t)\mathcal{A}(t)+\mathcal{A}(t)^{\top}M(t)+\mathcal{C}(t)^{\top}M(t)\mathcal{C}(t)+N(t)\mathcal{C}(t)\\
&+\mathcal{C}(t)^{\top}N(t)+\mathcal{Q}(t)]dt+N(t)dW(t),~t\in[0,T],
\end{aligned}\\
M(T)=\mathcal{G},~i\in\mathcal{M},
\end{array}
\right.
\end{equation}
with
$$
\begin{aligned}
&\mathcal{A}(t,\omega)\in L_{\mathcal{F}^{W}}^{2,\mathrm{BMO}}(0,T;\mathbb{R}^{n\times n}),\quad \mathcal{C}(t)=\mathcal{C}_{1}(t)+\mathcal{C}_{2}(t)\mathcal{C}_{3}(t),\\
&\mathcal{C}_{1}(t,\omega)\in L_{\mathcal{F}^{W}}^{\infty}(0,T;\mathbb{R}^{n\times n}),\quad,\mathcal{C}_{2}(t,\omega)\in L_{\mathcal{F}^{W}}^{\infty}(0,T;\mathbb{S}^{n}),\\
&\mathcal{C}_{3}(t,\omega)\in L_{\mathcal{F}^{W}}^{2,\mathrm{BMO}}(0,T;\mathbb{S}^{n}),\quad \mathcal{G}(\omega)\in L_{\mathcal{F}^{W}}^{\infty}(\Omega;\mathbb{S}^{n}),
\end{aligned}
$$
and $\mathcal{Q}(t,\omega)\in\mathcal{H}_{\mathcal{F}}^{p}(0,T;\mathbb{S}^{n})$ for any $p>2$.
\begin{lemma}\label{le-c}
Assume $\mathcal{G}\geq0,~\mathcal{Q}(t)\geq0$ for all $t\in[0,T]$ and BSDE \eqref{comparison} has a solution $(M,N)\in L_{\mathcal{F}^{W}}^{\infty}(0,T;\mathbb{S}^{n})\times L_{\mathcal{F}^{W}}^{2,\mathrm{BMO}}(0,T;\mathbb{S}^{n})$.
Then there exists a sufficiently small constant $\sigma_0>0$ (which depends on $\mathcal{C}_{3}$), when
$$
\sigma:=\left\|\mathcal{C}_{2}\right\|_{L_{\mathcal{F}^{W}}^{\infty}(0,T;\mathbb{S}^{n})}\leq\sigma_0,
$$
it holds that $M(t)\geq0$ for $t\in[0,T]$.
\end{lemma}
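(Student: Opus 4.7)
The plan is to establish $M(t)\geq 0$ via a Feynman--Kac type representation. For fixed $(t,x)\in[0,T]\times\mathbb{R}^n$, introduce on $[t,T]$ the forward linear SDE
\begin{equation*}
  dX(s) = \mathcal{A}(s)X(s)\,ds + \mathcal{C}(s)X(s)\,dW(s),\qquad X(t)=x,
\end{equation*}
which is well-posed under the BMO/$L^\infty$ hypotheses on $\mathcal{A}$ and $\mathcal{C}$. Applying It\^o's formula to $Y(s):=\langle M(s)X(s),X(s)\rangle$ and inserting \eqref{comparison}, every term in the drift produced by the $\Pi$-part of the generator (those from $M\mathcal{A}+\mathcal{A}^\top M$, from $\mathcal{C}^\top M\mathcal{C}$, and from $N\mathcal{C}+\mathcal{C}^\top N$) cancels against the corresponding It\^o cross variation, leaving only the source $-\langle\mathcal{Q}X,X\rangle$. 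Integrating on $[t,T]$,
\begin{equation*}
  \langle\mathcal{G}X(T),X(T)\rangle
  =\langle M(t)x,x\rangle
  -\int_t^T\langle\mathcal{Q}(s)X(s),X(s)\rangle\,ds
  +\int_t^T\langle[N+M\mathcal{C}+\mathcal{C}^\top M]X,X\rangle\,dW.
\end{equation*}
Assuming for the moment that the stochastic integral is a true $\{\mathcal{F}_s^W\}$-martingale, conditioning on $\mathcal{F}_t^W$ yields
\begin{equation*}
  \langle M(t)x,x\rangle = \mathbb{E}_t\bigl[\langle\mathcal{G}X(T),X(T)\rangle\bigr]
  +\mathbb{E}_t\!\left[\int_t^T\langle\mathcal{Q}(s)X(s),X(s)\rangle\,ds\right]\geq 0,
\end{equation*}
because $\mathcal{G},\mathcal{Q}\geq 0$, and the arbitrariness of $x$ together with $M(t)\in\mathbb{S}^n$ forces $M(t)\geq 0$.

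The nontrivial part is the true-martingale property of the stochastic integral, which I would derive from $L^p$-estimates on $X$ of arbitrary order. Applying It\^o's formula to $\log|X(s)|^2$ and regrouping gives
\begin{equation*}
  |X(s)|^2 = |x|^2\exp\!\Bigl(\int_t^s \tfrac{2X^\top\mathcal{A}X+|\mathcal{C}X|^2}{|X|^2}\,dr\Bigr)\cdot\mathcal{E}_{t,s}\!\Bigl(\textstyle\int \tfrac{2X^\top\mathcal{C}X}{|X|^2}\,dW\Bigr).
\end{equation*}
The stochastic exponential is driven by a martingale whose integrand is pointwise dominated by $2|\mathcal{C}|$; decomposing $\mathcal{C}=\mathcal{C}_1+\mathcal{C}_2\mathcal{C}_3$, the BMO norm of that driver is bounded by $2\sqrt{T}\,\|\mathcal{C}_1\|_{\infty}+2\sigma\,\|\mathcal{C}_3\|_{L^{2,\mathrm{BMO}}_{\mathcal{F}^W}}$. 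Choosing $\sigma_0$ small enough that this quantity lies below the Kazamaki threshold for the reverse H\"older inequality at arbitrarily large exponent $q'$, the stochastic exponential lies in $L^{q'}$ uniformly in $s$. The deterministic factor is controlled through John--Nirenberg applied to the BMO norms of $\mathcal{A}$ and $\mathcal{C}$, giving it finite $L^q$-norm for every $q$. A H\"older split then delivers $\mathbb{E}[\sup_{s\in[t,T]}|X(s)|^{2p}]<\infty$ for every $p\geq 1$.

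With such high moments of $X$ in hand, the integrand $\langle[N+M\mathcal{C}+\mathcal{C}^\top M]X,X\rangle$ lies in $L^2_{\mathcal{F}^W}(t,T;\mathbb{R})$ by H\"older applied to the $L^\infty$ bound on $M$ together with the BMO bounds on $N$ and $\mathcal{C}$; hence the stochastic integral in the first paragraph is a genuine square-integrable martingale and the argument closes. The main obstacle is exactly this $L^p$-moment estimate: because $\mathcal{A}$ and the $\mathcal{C}_3$-component of $\mathcal{C}$ are only BMO rather than $L^\infty$, a direct Gronwall estimate fails and one must invoke Kazamaki's reverse H\"older inequality, whose hypothesis on the BMO norm of the driver is precisely what the smallness of $\sigma$ secures. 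The threshold $\sigma_0$ is determined by the Kazamaki constant together with $T$, $\|\mathcal{C}_1\|_{\infty}$, $\|\mathcal{C}_3\|_{L^{2,\mathrm{BMO}}_{\mathcal{F}^W}}$, and $\|\mathcal{A}\|_{L^{2,\mathrm{BMO}}_{\mathcal{F}^W}}$.
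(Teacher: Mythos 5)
Your overall strategy is exactly the paper's: represent $\langle M(t)x,x\rangle$ by It\^o's formula along the forward flow $d\mathcal{X}=\mathcal{A}\mathcal{X}\,dt+\mathcal{C}\mathcal{X}\,dW$, observe that the drift reduces to $-\langle\mathcal{Q}\mathcal{X},\mathcal{X}\rangle$, and justify taking conditional expectations through moment bounds on $\sup_s|\mathcal{X}(s)|$ obtained from the explicit exponential formula for $|\mathcal{X}(s)|^2$ together with Kazamaki's reverse H\"older inequality, the smallness of $\sigma$ being what puts the relevant BMO norm under the Kazamaki threshold. So there is no methodological divergence to report.

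There is, however, a genuine flaw in the one step that actually produces $\sigma_0$. You bound the BMO norm of the driver of the stochastic exponential by $2\sqrt{T}\,\|\mathcal{C}_1\|_\infty+2\sigma\|\mathcal{C}_3\|_{L^{2,\mathrm{BMO}}}$ and propose to choose $\sigma_0$ so that \emph{this whole quantity} falls below the Kazamaki threshold $\Phi(q')$ ``at arbitrarily large exponent $q'$.'' This fails twice. First, the term $2\sqrt{T}\,\|\mathcal{C}_1\|_\infty$ is a fixed constant not controlled by $\sigma$, so no choice of $\sigma_0$ can make the sum small; second, $\Phi(q')\downarrow 0$ as $q'\to\infty$, so a single fixed BMO bound can never give reverse H\"older at \emph{all} exponents --- you must fix a target exponent $q_2>2$ first and then tune $\sigma_0$ to it. Both defects are repaired by the factorization the paper uses: split the exponential into the factor driven by the bounded part (coming from $\mathcal{C}_1$, $\eta$, $\gamma$), which has moments of every order by the standard estimate for stochastic exponentials of bounded integrands, and a factor $\mathcal{E}\bigl(\delta\sigma|\mathcal{C}_3|\cdot W\bigr)\exp\bigl\{\int(\rho\sigma^2+\kappa^2)\,q_2|\mathcal{C}_3|^2\,dr\bigr\}$ whose BMO data \emph{are} proportional to $\sigma$; then H\"older with exponents $q<q_1<q_2$ plus Kazamaki's Theorems 2.2 and 3.1 (reverse H\"older for the exponential, John--Nirenberg for the quadratic drift, both requiring $\sigma\|\mathcal{C}_3\|_{L^{2,\mathrm{BMO}}}$ below thresholds depending on $q_2$) gives $\mathbb{E}[\sup_s|\mathcal{X}(s)|^q]<\infty$ for that fixed $q$. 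Since only finitely many moments of $\mathcal{X}$ are needed to make your stochastic integral a true martingale (the energy inequality supplies all moments of $\int|N|^2\,ds$), fixing $q_2$ in advance costs nothing. With that correction your argument closes as intended.
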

\begin{proof}
For $x\in\mathbb{R}^n$ and $t\in[0,T]$, we consider the following SDE
\begin{equation}\label{A,C}
\left\{\begin{array}{l}
d\mathcal{X}(s)=\mathcal{A}(s)\mathcal{X}(s)dt+\mathcal{C}(s)\mathcal{X}(s)dW(s),~s\in[t,T],\\
\mathcal{X}(t)=x.
\end{array}\right.
\end{equation}
which has a unique solution (see \cite{Gal 1979}). A direct computation yields that
\begin{equation}\label{SDE-1}
\left\{\begin{array}{l}
d|\mathcal{X}(s)|^{2}=2tr\left(\mathcal{X}(s)\mathcal{X}(s)^{\top}[\mathcal{A}(s)+\mathcal{C}(s)^{\top}\mathcal{C}(s)]\right)dt+2tr\left(\mathcal{X}(s)\mathcal{X}(s)^{\top}\mathcal{C}(s)\right)dW(s),~s\in[t,T],\\
|\mathcal{X}(t)|^{2}=|x|^{2}.
\end{array}\right.
\end{equation}
By Lemma \ref{le-a}, SDE \eqref{SDE-1} can be rewritten as
\begin{equation}
\left\{\begin{array}{l}
d|\mathcal{X}(s)|^{2}=|\mathcal{X}(s)|^{2}\left(\rho(s)\sigma^{2}|\mathcal{C}_{3}(s)|^{2}+\beta(s)|\mathcal{C}_{3}(s)|+\gamma(s)\right)dt+|\mathcal{X}(s)|^{2}\left(\delta(s)\sigma|\mathcal{C}_{3}(s)|+\eta(s)\right)dW(s),\\
\qquad\qquad\qquad\qquad\qquad\qquad\qquad\qquad\qquad\qquad\qquad\qquad\qquad\qquad\qquad\qquad~s\in[t,T],\\
|\mathcal{X}(t)|^{2}=|x|^{2}.
\end{array}\right.
\end{equation}
where $\rho(s),\beta(s),\gamma(s),\delta(s),\eta(s)$ are uniformly bounded and adapted processes. 

Then we can obtain that
\begin{align*}
|\mathcal{X}(s)|^{2}&=|x|^2\exp\left\{\int_{t}^{s}\left[\rho(r)\sigma^{2}|\mathcal{C}_{3}(r)|^{2}+\beta(r)|\mathcal{C}_{3}(r)|+\gamma(r)-\frac{1}{2}\left(\delta(r)\sigma|\mathcal{C}_{3}(r)|+\eta(r)\right)^{2}\right]dr\right.\\
&\left.\qquad+\int_{t}^{s}\left(\delta(r)\sigma|\mathcal{C}_{3}(r)|+\eta(r)\right)dW(r)\right\}.
\end{align*}
Take $q_2>2$, and let $\sigma_0>0$ be sufficiently small such that 
\begin{equation*}
\begin{aligned}
&\|\delta\|_{L_{\mathcal{F}^{W}}^{\infty}(0,T;\mathbb{R})}\sigma_0\|\mathcal{C}_{3}\|_{L_{\mathcal{F}^{W}}^{2,\mathrm{BMO}}(0,T;\mathbb{S}^{n})}<\Phi(q_{2}),\\
&\sqrt{q_{2}\|\rho\|_{L_{\mathcal{F}^{W}}^{\infty}(0,T;\mathbb{R})}}\sigma_0\|\mathcal{C}_{3}\|_{L_{\mathcal{F}^{W}}^{2,\mathrm{BMO}}(0,T;\mathbb{S}^{n})}<1,
\end{aligned}
\end{equation*}
where
$$
\Phi(x)=\sqrt{1+\frac{1}{x^{2}}\log(\frac{2x-1}{2(x-1)})}-1,~1<x<\infty.
$$
When $\sigma\leq \sigma_0$, by H\"older's inequality, it holds that for $2<q<q_{1}<q_{2}$,
\begin{align*}
&\mathbb{E}\left[\sup_{t\leq s\leq T}|\mathcal{X}(t)|^{q}\right]\\
=&|x|^{q}\mathbb{E}\left[\sup_{t\leq s\leq T}\exp\left\{\int_{t}^{s}\left[\frac{q}{2}\gamma(r)-\frac{q}{4}\eta(r)^{2}\right]dr+\int_{t}^{s}\frac{q}{2}\eta(r)dW(r)\right\}\cdot\exp\left\{\int_{t}^{s}\frac{q}{2}\delta(r)\sigma|\mathcal{C}_{3}(r)|dW(r)\right\}\right.\\
&\cdot\left.\exp\left\{\int_{t}^{s}\left[\left(\frac{q}{2}\rho(r)-\frac{q}{4}\delta(r)^{2}\right)\sigma^{2}|\mathcal{C}_{3}(r)|^{2}+\frac{q}{2}\left(\beta(r)-\delta(r)\eta(r)\sigma\right)|\mathcal{C}_{3}(r)|\right]dr\right\}\right]\\
\leq&N\mathbb{E}\left[\sup_{t\leq s\leq T}\exp\left\{\int_{t}^{s}\frac{q_{1}}{2}\delta(r)\sigma|\mathcal{C}_{3}(r)|dW(r)\right\}\right.\\
&\cdot\left.\exp\left\{\int_{t}^{s}\left[\left(\frac{q_{1}}{2}\rho(r)-\frac{q_{1}}{4}\delta(r)^{2}\right)\sigma^{2}|\mathcal{C}_{3}(r)|^{2}+\frac{q_{1}}{2}\left(\beta(r)-\delta(r)\eta(r)\sigma\right)|\mathcal{C}_{3}(r)|\right]dr\right\}\right]^{\frac{q}{q_1}}\\
\leq&N\mathbb{E}\left[\sup_{t\leq s\leq T}\left\{\mathcal{E}_{t,s}\left(\delta\sigma|\mathcal{C}_{3}|\cdot W\right)\right\}^{\frac{q_{1}}{2}}\cdot\exp\left\{\int_{t}^{s}\frac{q_{1}}{2}\rho(r)\sigma^{2}|\mathcal{C}_{3}(r)|^{2}dr\right\}\right.\\
&\cdot\left.\exp\left\{\int_{t}^{s}\left[\frac{q_{1}\kappa^{2}}{2}|\mathcal{C}_{3}(r)|^{2}+\frac{q_{1}\left(\beta(r)-\delta(r)\eta(r)\sigma\right)^{2}}{8\kappa^{2}}\right]dr\right\}\right]^{\frac{q}{q_1}}\\
\leq&N\mathbb{E}\left[\sup_{t\leq s\leq T}\left\{\mathcal{E}_{t,s}\left(\delta\sigma|\mathcal{C}_{3}|\cdot W\right)\right\}^{\frac{q_{2}}{2}}\cdot\exp\left\{\int_{t}^{s}\frac{q_{2}}{2}(\rho(r)\sigma^{2}+\kappa^2)|\mathcal{C}_{3}(r)|^{2}dr\right\}\right]^{\frac{q}{q_2}}\\
\leq&N\mathbb{E}\left[\sup_{t\leq s\leq T}\left\{\mathcal{E}_{t,s}\left(\delta(i)\sigma|\mathcal{C}_{3}|\cdot W\right)\right\}^{q_{2}}\right]^{\frac{q}{2q_2}}\mathbb{E}\left[\sup_{t\leq s\leq T}\exp\left\{\int_{t}^{s}q_{2}(\rho(r)\sigma^{2}+\kappa^2)|\mathcal{C}_{3}(r)|^{2}dr\right\}\right]^{\frac{q}{2q_2}}\\
\leq&N,
\end{align*}
where $\kappa$ is a sufficiently small positive constant, $N$ is positive constant changing from line to line and the last inequality follows from \cite[Theorem 2.2 and Theorem 3.1]{Kazamaki 1994}. Finally by It\^o's formula, it holds that 
\begin{align*}
\mathbb{E}_{t}\left[\int_{t}^{T}d\langle M(s)\mathcal{X}(s),\mathcal{X}(s)\rangle\right]=&\mathbb{E}_{t}\left[\int_{t}^{T}\langle dM(s)\mathcal{X}(s),\mathcal{X}(s)\rangle\right]+\mathbb{E}_{t}\left[\int_{t}^{T}\langle M(s)\mathcal{X}(s),d\mathcal{X}(s)\rangle\right]\\
&+\mathbb{E}_{t}\left[\int_{t}^{T}\langle dM(s)\mathcal{X}(s),d\mathcal{X}(s)\rangle\right]\\
=&-\mathbb{E}_{t}\left[\int_{t}^{T}\langle \mathcal{Q}(s)\mathcal{X}(s),\mathcal{X}(s)\rangle ds\right].
\end{align*}
Therefore we have
\begin{align*}
\langle M(t)x,x\rangle=\mathbb{E}_{t}\left[\int_{t}^{T}\langle \mathcal{Q}(s)\mathcal{X}(s),\mathcal{X}(s)\rangle ds+\langle \mathcal{G}\mathcal{X}(T),\mathcal{X}(T)\rangle\right],
\end{align*}
which completes the proof.
\end{proof}

\begin{remark}
Clearly, we can get $M(t)\leq0$ if we replace $\mathcal{G}\geq0,~\mathcal{Q}(t)\geq0$ by $\mathcal{G}\leq0,~\mathcal{Q}(t)\leq0$ for all $t\in[0,T]$ in Lemma \ref{le-c}.
\end{remark}

\subsection{Existence of solution for ESRE \eqref{Riccati}}
We are now ready to prove the existence of solution for ESRE \eqref{Riccati}. We first construct a monotone sequence $\left\{(\widetilde{P}_{k},\widetilde{\Lambda}_{k})\right\}_{k\geq0}$ and provide a priori estimates. Next we obtain the convergence of $\left\{\widetilde{P}_{k}\right\}_{k\geq0}$ by monotone convergence theorem and dominated convergence theorem. In order to prove the convergence of $\left\{\widetilde{\Lambda}_{k}\right\}_{k\geq0}$, we need to show that $\left\{\widetilde{P}_{k}\right\}_{k\geq0}$ converges in some finer space. The key ingredient is to link $\left\{(\widetilde{P}_{k},\widetilde{\Lambda}_{k})\right\}_{k\geq1}$ with the solutions of fully coupled FBSDEs $\left\{(\mathbf{X}_k,\mathbf{Y}_k,\mathbf{Z}_k)\right\}_{k\geq 1}$. We establish $L^p$ estimates and obtain convergence of $\left\{(\mathbf{X}_k,\mathbf{Y}_k,\mathbf{Z}_k)\right\}_{k\geq 1}$. Finally we get the convergence of $\left\{(\widetilde{P}_{k},\widetilde{\Lambda}_{k})\right\}_{k\geq0}$ and obtain a solution for ESRE \eqref{Riccati}.

\begin{proof}[Proof\ of\ Theorem \ref{th-a}]The proof is divided into several steps.\\
$\textbf{Step 1. (Construction of Piccard iteration schemes)}$ To begin with, let $(\widetilde{P}_{0}(i),\widetilde{\Lambda}_{0}(i))_{i=1}^{l}$ be the unique solution of \eqref{overlineP0} such that $(\widetilde{P}_{0}(i),\widetilde{\Lambda}_{0}(i))\in L_{\mathcal{F}^{W}}^{\infty}(0,T;\mathbb{S}^{n})\times L_{\mathcal{F}^{W}}^{2,\mathrm{BMO}}(0,T;\mathbb{S}^{n})$ and $\tilde{P}(i)\geq 0$ for all $i\in\mathcal{M}$. Then it follows from \cite[Theorem 5.1]{Peng 1992} and \cite[Theorem 2.4]{Tang 2003} that 
\begin{equation*}
\left\{
\begin{array}{l}
\begin{aligned}
d\widetilde{P}_{1}(t,i)=&-[\Pi(t,i,\widetilde{P}_{1},\widetilde{\Lambda}_{1})+\widetilde{Q}(t,i)+H(t,i,\widetilde{P}_{1},\widetilde{\Lambda}_{1},\widetilde{R},\widetilde{S})\\
&\quad+\sum_{j\neq i}q_{ij}e^{(q_{ii}-q_{jj})t}\widetilde{P}_{0}(t,j)]dt+\widetilde{\Lambda}_{1}(t,i)dW(t),
\end{aligned}\\
\widetilde{R}(t,i)+D(t,i)^{\top}\widetilde{P}_{1}(t,i)D(t,i)>0,\\
\widetilde{P}_{1}(T,i)=\widetilde{G}(i),~t\in[0,T],~i\in\mathcal{M},
\end{array}
\right.
\end{equation*}
with
\begin{equation*}
\begin{aligned}
&\widetilde{Q}(t,i)=e^{q_{ii}t}Q(t,i),\quad\widetilde{R}(t,i)=e^{q_{ii}t}R(t,i),\\
&\widetilde{S}(t,i)=e^{q_{ii}t}S(t,i),\quad\widetilde{G}(i)=e^{q_{ii}T}G(i),
\end{aligned}
\end{equation*}
admits a unique solution $(\widetilde{P}_{1}(i),\widetilde{\Lambda}_{1}(i))_{i=1}^{l}$ such that $(\widetilde{P}_{1}(i),\widetilde{\Lambda}_{1}(i))\in L_{\mathcal{F}^{W}}^{\infty}(0,T;\mathbb{S}^{n})\times L_{\mathcal{F}^{W}}^{2}(0,T;\mathbb{S}^{n})$ and $\widetilde{P}(i)\geq 0$ for all $i\in\mathcal{M}$. On the other hand, it is easy to verify that $H(t,i,\widetilde{P}_{1},\widetilde{\Lambda}_{1},\widetilde{R},\widetilde{S})\leq 0$. Therefore, it follows from a similar arugment as in Lemma \ref{le-estimate} that $\widetilde{\Lambda}_{1}(i)\in L_{\mathcal{F}^{W}}^{2,\mathrm{BMO}}(0,T;\mathbb{S}^{n})$. By induction we obtain a family of $\left\{(\widetilde{P}_{k}(i),\widetilde{\Lambda}_{k}(i))_{i=1}^{l}\right\}_{k\in\mathbb{N}}$ such that for all $k\in\mathbb{N}$, $(\widetilde{P}_{k}(i),\widetilde{\Lambda}_{k}(i))\in L_{\mathcal{F}^{W}}^{\infty}(0,T;\mathbb{S}^{n})\times L_{\mathcal{F}^{W}}^{2,\mathrm{BMO}}(0,T;\mathbb{S}^{n})$ satisfying $\widetilde{P}_{k}(i)\geq0$ for $i\in\mathcal{M}$ and
\begin{equation}\label{overlinePk}
\left\{
\begin{array}{l}
\begin{aligned}
d\widetilde{P}_{k+1}(t,i)=&-[\Pi(t,i,\widetilde{P}_{k+1},\widetilde{\Lambda}_{k+1})+\widetilde{Q}(t,i)+H(t,i,\widetilde{P}_{k+1},\widetilde{\Lambda}_{k+1},\widetilde{R},\widetilde{S})\\
&\quad+\sum_{j\neq i}q_{ij}e^{(q_{ii}-q_{jj})t}\widetilde{P}_{k}(t,j)]dt+\widetilde{\Lambda}_{k+1}(t,i)dW(t),
\end{aligned}\\
\widetilde{R}(t,i)+D(t,i)^{\top}\widetilde{P}_{k+1}(t,i)D(t,i)>0,\\
\widetilde{P}_{k+1}(T,i)=\widetilde{G}(i),~t\in[0,T],~i\in\mathcal{M}.
\end{array}
\right.
\end{equation}
Moreover, noting the non-negativity of $\tilde{P}_{k}$ and hence the non-positivity of $H(t,i,\widetilde{P}_{k},\widetilde{\Lambda}_{k},\widetilde{R},\widetilde{S})$, similar to Lemma \ref{le-estimate}, we have for all $k\in\mathbb{N}$ and $i\in\mathcal{M}$,
\begin{equation}\label{BMO2}
\begin{aligned}
&\|e^{\frac{1}{2}\rho\cdot}\widetilde{P}_{k}(\cdot,i)\|_{L_{\mathcal{F}^{W}}^{\infty}(0,T;\mathbb{S}^{n})}\leq\sqrt{\frac{3}{2}e^{\rho T}(K^{2}+\frac{1}{\rho})},\\
&\|e^{\frac{1}{2}\rho\cdot}\widetilde{\Lambda}_{k}(\cdot,i)\|_{L_{\mathcal{F}^{W}}^{2,\mathrm{BMO}}(0,T;\mathbb{S}^{n})}\leq \sqrt{3e^{\rho T}(K^{2}+\frac{1}{\rho})},
\end{aligned}
\end{equation}
where $\rho=\left(3(l-1)^{2}T+3\right)K^{2}+3K$.\\
\\
$\textbf{Step 2. (Monotone convergence of $\widetilde{P}_{k}$)}$
From the construction of $\widetilde{P}_{0}(i)$ and $\widetilde{P}_{1}(i)$, we obtain that
\begin{align*}
d\left(\widetilde{P}_{1}(t,i)-\widetilde{P}_{0}(t,i)\right)=&-\left[\Pi(t,i,\widetilde{P}_{1}-\widetilde{P}_{0},\widetilde{\Lambda}_{1}-\widetilde{\Lambda}_{0})+H(t,i,\widetilde{P}_{1},\widetilde{\Lambda}_{1},\widetilde{R},\widetilde{S})\right]dt\\
&+\left(\widetilde{\Lambda}_{1}(t,i)-\widetilde{\Lambda}_{0}(t,i)\right)dW(t).
\end{align*}
Since
\begin{equation*}
\widetilde{P}_{1}(t,i)\geq0,~\widetilde{R}(t,i)>0,~i\in\mathcal{M},~t\in[0,T],
\end{equation*}
then
\begin{equation*}
H(t,i,\widetilde{P}_{1},\widetilde{\Lambda}_{1},\widetilde{R},\widetilde{S})\leq0,~i\in\mathcal{M},~t\in[0,T].
\end{equation*}
Also with $\widetilde{P}_{1}(T,i)-\widetilde{P}_{0}(T,i)=0$, we obtain from \cite[Lemma 5.2]{Peng 1992} that
\begin{equation*}
\widetilde{P}_{1}(t,i)\leq\widetilde{P}_{0}(t,i),~i\in\mathcal{M},~t\in[0,T].
\end{equation*}
To prove
\begin{equation*}
\widetilde{P}_{k+1}(t,i)\leq\widetilde{P}_{k}(t,i),~k\in\mathbb{N},~i\in\mathcal{M},~t\in[0,T],
\end{equation*}
we first define
\begin{equation}\label{GF}
\left\{
\begin{array}{l}
\begin{aligned}
G(t,i,\widetilde{P},\widetilde{\Lambda},\theta)&=\left(A(t,i)+B(t,i)\theta\right)^{\top}\widetilde{P}(t,i)+\widetilde{P}(t,i)\left(A(t,i)+B(t,i)\theta\right)\\
&\quad+\left(C(t,i)+D(t,i)\theta\right)^{\top}\widetilde{\Lambda}(t,i)+\widetilde{\Lambda}(t,i)\left(C(t,i)+D(t,i)\theta\right)\\
&\quad+\left(C(t,i)+D(t,i)\theta\right)^{\top}\widetilde{P}(t,i)\left(C(t,i)+D(t,i)\theta\right),
\end{aligned}\\
F(t,i,\widetilde{P},\widetilde{\Lambda},\theta)=G(t,i,\widetilde{P},\widetilde{\Lambda},\theta)+\theta^{\top}\widetilde{S}(t,i)+\widetilde{S}(t,i)^{\top}\theta+\theta^{\top}\widetilde{R}(t,i)\theta+\widetilde{Q}(t,i),
\end{array}
\right.
\end{equation}
for $\theta\in\mathbb{R}^{m\times n}$ and for $\widetilde{R}(t,i)+D(t,i)^{\top}\widetilde{P}(t,i)D(t,i)>0$
\begin{equation}\label{theta}
\begin{aligned}
\widehat{\theta}(t,i,\widetilde{P},\widetilde{\Lambda})&=-\left(\widetilde{R}(t,i)+D(t,i)^{\top}\widetilde{P}(t,i)D(t,i)\right)^{-1}\\
&\quad\cdot\left(B(t,i)^{\top}\widetilde{P}(t,i)+D(t,i)^{\top}\widetilde{P}(t,i)C(t,i)+D(t,i)^{\top}\widetilde{\Lambda}(t,i)+\widetilde{S}(t,i)\right).
\end{aligned}\\
\end{equation}
Since
\begin{align*}
\left.\frac{\partial F(t,i,\widetilde{P},\widetilde{\Lambda},\theta)}{\partial\theta}\right|_{\theta=\widehat{\theta}(t,i,\widetilde{P},\widetilde{\Lambda})}=0,
\end{align*}
then
\begin{align*}
F\left(t,i,\widetilde{P},\widetilde{\Lambda},\widehat{\theta}(t,i,\widetilde{P},\widetilde{\Lambda})\right)\leq F\left(t,i,\widetilde{P},\widetilde{\Lambda},\theta\right),~i\in\mathcal{M},~t\in[0,T].
\end{align*}
Now we consider
\begin{align*}
d\left(\widetilde{P}_{2}(t,i)-\widetilde{P}_{1}(t,i)\right)=&-\left[F\left(t,i,\widetilde{P}_{2},\widetilde{\Lambda}_{2},\widehat{\theta}(t,i,\widetilde{P}_{2},\widetilde{\Lambda}_{2})\right)
-F\left(t,i,\widetilde{P}_{1},\widetilde{\Lambda}_{1},\widehat{\theta}(t,i,\widetilde{P}_{1},\widetilde{\Lambda}_{1})\right)\right.\\
&\quad+\textstyle\sum_{j\neq i}\left.q_{ij}e^{(q_{ii}-q_{jj})t}\left({P}_{1}(t,j)-{P}_{0}(t,j)\right)\right]dt+\left(\widetilde{\Lambda}_{2}(t,i)-\widetilde{\Lambda}_{1}(t,i)\right)dW(t)\\
=&-\left[F\left(t,i,\widetilde{P}_{2},\widetilde{\Lambda}_{2},\widehat{\theta}(t,i,\widetilde{P}_{2},\widetilde{\Lambda}_{2})\right)
-F\left(t,i,\widetilde{P}_{2},\widetilde{\Lambda}_{2},\widehat{\theta}(t,i,\widetilde{P}_{1},\widetilde{\Lambda}_{1})\right)\right.\\
&\quad+F\left(t,i,\widetilde{P}_{2},\widetilde{\Lambda}_{2},\widehat{\theta}(t,i,\widetilde{P}_{1},\widetilde{\Lambda}_{1})\right)
-F\left(t,i,\widetilde{P}_{1},\widetilde{\Lambda}_{1},\widehat{\theta}(t,i,\widetilde{P}_{1},\widetilde{\Lambda}_{1})\right)\\
&\quad+\textstyle\sum_{j\neq i}\left.q_{ij}e^{(q_{ii}-q_{jj})t}\left({P}_{1}(t,j)-{P}_{0}(t,j)\right)\right]dt+\left(\widetilde{\Lambda}_{2}(t,i)-\widetilde{\Lambda}_{1}(t,i)\right)dW(t)\\
=&-\left[F\left(t,i,\widetilde{P}_{2},\widetilde{\Lambda}_{2},\widehat{\theta}(t,i,\widetilde{P}_{2},\widetilde{\Lambda}_{2})\right)
-F\left(t,i,\widetilde{P}_{2},\widetilde{\Lambda}_{2},\widehat{\theta}(t,i,\widetilde{P}_{1},\widetilde{\Lambda}_{1})\right)\right.\\
&\quad+G\left(t,i,\widetilde{P}_{2}-\widetilde{P}_{1},\widetilde{\Lambda}_{2}-\widetilde{\Lambda}_{1},\widehat{\theta}(t,i,\widetilde{P}_{1},\widetilde{\Lambda}_{1})\right)\\
&\quad+\textstyle\sum_{j\neq i}\left.q_{ij}e^{(q_{ii}-q_{jj})t}\left({P}_{1}(t,j)-{P}_{0}(t,j)\right)\right]dt+\left(\widetilde{\Lambda}_{2}(t,i)-\widetilde{\Lambda}_{1}(t,i)\right)dW(t).
\end{align*}
Since
\begin{align*}
&F\left(t,i,\widetilde{P}_{2},\widetilde{\Lambda}_{2},\widehat{\theta}(t,i,\widetilde{P}_{2},\widetilde{\Lambda}_{2})\right)-F\left(t,i,\widetilde{P}_{2},\widetilde{\Lambda}_{2},\widehat{\theta}(t,i,\widetilde{P}_{1},\widetilde{\Lambda}_{1})\right)\leq0,\\
&q_{ij}\geq0,~\widetilde{P}_{1}(t,j)-\widetilde{P}_{0}(t,j)\leq0,~\widetilde{P}_{2}(T,i)-\widetilde{P}_{1}(T,i)=0,\quad i\in\mathcal{M},~j\neq i,~t\in[0,T],
\end{align*}
it follows from Lemma \ref{le-c} that there exists a sufficiently small constant $\sigma_0$ such that when 
\begin{equation*}
  e^{-q_{ii}t}|D(t,i)R(t,i)^{-1}D(t,i)^{\top}|\leq \sigma_0,~~i\in\mathcal{M},~t\in[0,T],
\end{equation*}
it holds that
\begin{align*}
\widetilde{P}_{2}(t,i)\leq\widetilde{P}_{1}(t,i),~i\in\mathcal{M},~t\in[0,T].
\end{align*}
Similarly, by induction we have
\begin{align*}
\widetilde{P}_{k+1}(t,i)\leq\widetilde{P}_{k}(t,i),\quad k\in\mathbb{N},~i\in\mathcal{M},~t\in[0,T].
\end{align*}
To conclude, it holds that
\begin{align*}
0\leq\widetilde{P}_{k+1}(t,i)\leq\widetilde{P}_{k}(t,i)\leq\widetilde{P}_{0}(t,i),~k\in\mathbb{N},~i\in\mathcal{M},~t\in[0,T].
\end{align*}
Therefore, by monotone convergence theorem and dominated convergence theorem, for each $i\in\mathcal{M}$, there exists an adapted process $\widetilde{P}(i)$ such that for any $p>2$,
\begin{align*}
\lim_{k\rightarrow\infty}\mathbb{E}\left[\int_{0}^{T}\left|\widetilde{P}_{k}(t,i)-\widetilde{P}(t,i)\right|^{p}ds\right]=0.
\end{align*}
Without loss of any generality, we also assume that for all $t\in[0,T]$ and $p>2$,
\begin{align*}
\lim_{k\rightarrow\infty}\mathbb{E}\left[\left|\widetilde{P}_{k}(t,i)-\widetilde{P}(t,i)\right|^{p}\right]=0.
\end{align*}
\\
\textbf{Step 3. ($L^{p}$ estimates and convergence of $\left\{(\mathbf{X}_k,\mathbf{Y}_k,\mathbf{Z}_k)\right\}_{k\geq 1}$)} Inspired by the connection between stochastic Riccati equations and FBSDEs, we consider FBSDEs of the following types:
\begin{equation}\label{FBSDE-1}
\left\{
\begin{array}{l}
d\mathbf{X}_{k}(t,i)=\left[A(t,i)\mathbf{X}_{k}(t,i)+B(t,i)\mathbf{u}_{k}(t,i)\right]dt+\left[C(t,i)\mathbf{X}_{k}(t,i)+D(t,i)\mathbf{u}_{k}(t,i)\right]dW(t),\\
\begin{aligned}
d\mathbf{Y}_{k}(t,i)=&-\left[A(t,i)^{\top}\mathbf{Y}_{k}(t,i)\right.+C(t,i)^{\top}\mathbf{Z}_{k}(t,i)+\left(\widetilde{Q}(t,i)+\textstyle\sum_{j\neq i}q_{ij}e^{(q_{ii}-q_{jj})t}\widetilde{P}_{k-1}(t,j)\right)\mathbf{X}_{k}(t,i)\\
&+\left.\widetilde{S}(t,i)^{\top}\mathbf{u}_{k}(t,i)\right]dt+\mathbf{Z}_{k}(t,i)dW(t),
\end{aligned}\\
\mathbf{X}_{k}(0,i)=\mathbf{I}_{n},~\mathbf{Y}_{k}(T,i)=\widetilde{G}(i)\mathbf{X}_{k}(T,i),\\
\mathbf{u}_{k}(t,i)=-\widetilde{R}(t,i)^{-1}\left(B(t,i)^{\top}\mathbf{Y}_{k}(t,i)+D(t,i)^{\top}\mathbf{Z}_{k}(t,i)+\widetilde{S}(t,i)\mathbf{X}_{k}(t,i)\right),~t\in[0,T],~ k\geq1,~i\in\mathcal{M},
\end{array}
\right.
\end{equation}
in which the following relationship holds on $[0,T]$:
\begin{equation}\label{Y=PX}
\begin{aligned}
&\mathbf{Y}_{k}(t,i)=\widetilde{P}_{k}(t,i)\mathbf{X}_{k}(t,i),\\
&\mathbf{Z}_{k}(t,i)=\widetilde{\Lambda}_{k}(t,i)\mathbf{X}_{k}(t,i)+\widetilde{P}_{k}(t,i)C(t,i)\mathbf{X}_{k}(t,i)+\widetilde{P}_{k}(t,i)D(t,i)\mathbf{u}_{k}(t,i).
\end{aligned}
\end{equation}
More details for the  relationship \eqref{Y=PX} can be found in \cite{Sun et al. 2021,Tang 2003}. We rewrite \eqref{FBSDE-1} as
\begin{equation}\label{FBSDE-2}
\left\{
\begin{array}{l}
\begin{aligned}
d\mathbf{X}_{k}(t,i)=&\left[\left(A(t,i)-B(t,i)\widetilde{R}(t,i)^{-1}\widetilde{S}(t,i)\right)\mathbf{X}_{k}(t,i)-B(t,i)\widetilde{R}(t,i)^{-1}B(t,i)^{\top}\mathbf{Y}_{k}(t,i)\right.\\
&\left.-B(t,i)\widetilde{R}(t,i)^{-1}D(t,i)^{\top}\mathbf{Z}_{k}(t,i)\right]dt+\left[\left(C(t,i)-D(t,i)\widetilde{R}(t,i)^{-1}\widetilde{S}(t,i)\right)\mathbf{X}_{k}(t,i)\right.\\
&\left.-D(t,i)\widetilde{R}(t,i)^{-1}B(t,i)^{\top}\mathbf{Y}_{k}(t,i)-D(t,i)\widetilde{R}(t,i)^{-1}D(t,i)^{\top}\mathbf{Z}_{k}(t,i)\right]dW(t)\\
\triangleq&b(t,i,\mathbf{X}_{k}(t,i),\mathbf{Y}_{k}(t,i),\mathbf{Z}_{k}(t,i))dt+\sigma(t,i,\mathbf{X}_{k}(t,i),\mathbf{Y}_{k}(t,i),\mathbf{Z}_{k}(t,i))dW(t),
\end{aligned}\\
\begin{aligned}
d\mathbf{Y}_{k}(t,i)=&-\left[\left(\widetilde{Q}(t,i)-\widetilde{S}(t,i)^{\top}\widetilde{R}(t,i)^{-1}\widetilde{S}(t,i)+\textstyle\sum_{j\neq i}q_{ij}e^{(q_{ii}-q_{jj})t}\widetilde{P}_{k-1}(t,j)\right)\mathbf{X}_{k}(t,i)\right.\\
&\quad+\left(A(t,i)^{\top}-\widetilde{S}(t,i)^{\top}\widetilde{R}(t,i)^{-1}B(t,i)^{\top}\right)\mathbf{Y}_{k}(t,i)\\
&\quad+\left.\left(C(t,i)^{\top}-\widetilde{S}(t,i)^{\top}\widetilde{R}(t,i)^{-1}D(t,i)^{\top}\right)\mathbf{Z}_{k}(t,i)\right]dt+\mathbf{Z}_{k}(t,i)dW(t)\\
\triangleq&-f_{k}(t,i,\mathbf{X}_{k}(t,i),\mathbf{Y}_{k}(t,i),\mathbf{Z}_{k}(t,i))dt+\mathbf{Z}_{k}(t,i)dW(t),
\end{aligned}\\
\mathbf{X}_{k}(0,i)=\mathbf{I}_{n},~\mathbf{Y}_{k}(T,i)=\widetilde{G}(i)\mathbf{X}_{k}(T,i)\triangleq h(i,\mathbf{X}_{k}(T,i)),\qquad k\geq1,~i\in\mathcal{M},~t\in[0,T].
\end{array}
\right.
\end{equation}
Rcalling uniform boundedness of $\widetilde{P}_{k}(t,j)$, and assumption $(\mathscr{A}1)$, we denote $L$ the Lipschitz coefficient of $b,\sigma,f_k,h$ and in particular $L_{\sigma,z}$ the Lipschitz coefficient of $\sigma$ with respect to $z$. We will also assume that $L_{\sigma,z}\leq L$.

Since for $k\geq1,i\in\mathcal{M}$, $\mathbf{Y}_{k}(t,i)=\widetilde{P}_{k}(t,i)\mathbf{X}_{k}(t,i)$ holds on $[0,T]$ and $\widetilde{P}_k(t,i)$ is uniformly bounded, a pasting technique combing with $L^{p}$ estimate for FBSDEs on small time horizon (see \cite{Yong 2020}) allows us to obtain the following $L^{p}$ estimate for FBSDE \eqref{FBSDE-2}, i.e., for every $p>2$, there exists sufficiently small constant $\bar{L}_{\sigma,p}>0$ such that when $L_{\sigma,z}\leq \bar{L}_{\sigma,p}$, it holds that
\begin{align*}
\mathbb{E}\left[\sup\limits_{0\leq t\leq T}|\mathbf{X}_{k}(t,i)|^{p}+\sup\limits_{0\leq t\leq T}|\mathbf{Y}_{k}(t,i)|^{p}+\left(\int_{0}^{T}|\mathbf{Z}_{k}(t,i)|^{2}dt\right)^{\frac{p}{2}}\right]\leq B_{p}
\end{align*}
where $B_{p}$ is a positive constant only depending on $p,L$.

To prove the convergence of ${(\mathbf{X}_{k}(i),\mathbf{Y}_{k}(i),\mathbf{Z}_{k}(i))}$, we denote
\begin{align*}
&\Delta\mathbf{X}_{k+1}(t,i)=\mathbf{X}_{k+1}(t,i)-\mathbf{X}_{k}(t,i),~\Delta\mathbf{Y}_{k+1}(t,i)=\mathbf{Y}_{k+1}(t,i)-\mathbf{Y}_{k}(t,i),\\
&\Delta\mathbf{Z}_{k+1}(t,i)=\mathbf{Z}_{k+1}(t,i)-\mathbf{Z}_{k}(t,i),~\Delta\mathbf{u}_{k+1}(t,i)=\mathbf{u}_{k+1}(t,i)-\mathbf{u}_{k}(t,i),\\
&\Delta\widetilde{P}_{k+1}(t,i)=\widetilde{P}_{k+1}(t,i)-\widetilde{P}_{k}(t,i),\quad k\geq1,~i\in\mathcal{M},~t\in[0,T],
\end{align*}
and obtain that
\begin{equation}\label{FBSDE-3}
\left\{
\begin{array}{l}
\begin{aligned}
d\Delta\mathbf{X}_{k}(t,i)=&\left[\left(A(t,i)-B(t,i)\widetilde{R}(t,i)^{-1}\widetilde{S}(t,i)\right)\Delta\mathbf{X}_{k}(t,i)-B(t,i)\widetilde{R}(t,i)^{-1}B(t,i)^{\top}\Delta\mathbf{Y}_{k}(t,i)\right.\\
&\left.-B(t,i)\widetilde{R}(t,i)^{-1}D(t,i)^{\top}\Delta\mathbf{Z}_{k}(t,i)\right]dt+\left[\left(C(t,i)-D(t,i)\widetilde{R}(t,i)^{-1}\widetilde{S}(t,i)\right)\Delta\mathbf{X}_{k}(t,i)\right.\\
&\left.-D(t,i)\widetilde{R}(t,i)^{-1}B(t,i)^{\top}\Delta\mathbf{Y}_{k}(t,i)-D(t,i)\widetilde{R}(t,i)^{-1}D(t,i)^{\top}\Delta\mathbf{Z}_{k}(t,i)\right]dW(t).
\end{aligned}\\
\begin{aligned}
d\Delta\mathbf{Y}_{k}(t,i)=&-\left[\left(\widetilde{Q}(t,i)-\widetilde{S}(t,i)^{\top}\widetilde{R}(t,i)^{-1}\widetilde{S}(t,i)+\textstyle\sum_{j\neq i}q_{ij}e^{(q_{ii}-q_{jj})t}\widetilde{P}_{k-1}(t,j)\right)\Delta\mathbf{X}_{k}(t,i)\right.\\
&\quad+\left(A(t,i)^{\top}-\widetilde{S}(t,i)^{\top}\widetilde{R}(t,i)^{-1}B(t,i)^{\top}\right)\Delta\mathbf{Y}_{k}(t,i)\\
&\quad+\left(C(t,i)^{\top}-\widetilde{S}(t,i)^{\top}\widetilde{R}(t,i)^{-1}D(t,i)^{\top}\right)\Delta\mathbf{Z}_{k}(t,i)\\
&\quad+\textstyle\sum_{j\neq i}\left.q_{ij}e^{(q_{ii}-q_{jj})t}\Delta\widetilde{P}_{k-1}(t,j)\mathbf{X}_{k-1}(t,i)\right]dt+\Delta\mathbf{Z}_{k}(t,i)dW(t).
\end{aligned}\\
\Delta\mathbf{X}_{k}(0,i)=\mathbf{0},~\Delta\mathbf{Y}_{k}(T,i)=\widetilde{G}(i)\Delta\mathbf{X}_{k}(T,i),\quad k\geq2,~i\in\mathcal{M},~t\in[0,T].
\end{array}
\right.
\end{equation}
For $0<\varepsilon< T$ and $t\in[0,T-\varepsilon]$, standard technique for BSDEs implies that 
$$
\begin{aligned}
|\Delta\mathbf{Y}_{k}(t,i)|^{2}\leq&C\mathbb{E}_{t}\left[|\Delta\mathbf{Y}_{k}(t+\varepsilon,i)|^{2}\right]+C\mathbb{E}_{t}\left[\sup\limits_{t\leq s\leq t+\varepsilon}|\Delta\mathbf{X}_{k}(s,i)|^{2}\right]\\
&+CK^{2}\textstyle\sum_{j\neq i}\mathbb{E}_{t}\left[\int_{t}^{t+\varepsilon}|\Delta\widetilde{P}_{k-1}(s,j)\mathbf{X}_{k-1}(s,i)|^{2}ds\right]\\
\leq&C\mathbb{E}_{t}\left[|\Delta\widetilde{P}_{k}(t+\varepsilon,i)\mathbf{X}_{k}(t+\varepsilon,i)|^{2}\right]+C\mathbb{E}_{t}\left[\sup\limits_{t\leq s\leq t+\varepsilon}|\Delta\mathbf{X}_{k}(s,i)|^{2}\right]\\
&+CK^{2}\textstyle\sum_{j\neq i}\mathbb{E}_{t}\left[\int_{t}^{t+\varepsilon}|\Delta\widetilde{P}_{k-1}(s,j)\mathbf{X}_{k-1}(s,i)|^{2}ds\right],
\end{aligned}
$$
where $C$ only depends on Lipschitz constants $L$ and the uniform bound of $\widetilde{P}_{k-1}$, changing from line to line.

For $p>2$, it follows from Doob's martingale inequality that
\begin{equation}\label{Y}
\begin{aligned}
\mathbb{E}\left[\sup\limits_{t\leq s\leq t+\varepsilon}|\Delta\mathbf{Y}_{k}(s,i)|^{p}\right]\leq &C_{p}\mathbb{E}\left[|\Delta\widetilde{P}_{k}(t+\varepsilon,i)\mathbf{X}_{k}(t+\varepsilon,i)|^{2}\right]+C_{p}\mathbb{E}\left[\sup\limits_{t\leq s\leq t+\varepsilon}|\Delta\mathbf{X}_{k}(s,i)|^{p}\right]\\
&+C_{p}K^{p}\textstyle\sum_{j\neq i}\mathbb{E}\left[\int_{t}^{t+\varepsilon}|\Delta\widetilde{P}_{k-1}(s,j)\mathbf{X}_{k-1}(s,i)|^{p}ds\right],
\end{aligned}
\end{equation}
where $C_{p}$ depends on $p,L$ and the uniform bound of $\widetilde{P}_{k-1}$. From the Burkholder-Davis-Gundy inequality and \eqref{Y},
\begin{equation}
\begin{aligned}
\mathbb{E}\left[\left(\int_{t}^{t+\varepsilon}|\Delta\mathbf{Z}_{k}(s,i)|^{2}dt\right)^{\frac{p}{2}}\right]\leq&C_{p}\mathbb{E}\left[\sup_{t\leq s\leq t+\varepsilon}\left|\int_{t}^{s}\Delta\mathbf{Z}_{k}(s,i)dW(s)\right|^{p}\right]\\
\leq&C_{p}\mathbb{E}\left[\sup_{t\leq s\leq t+\varepsilon}|\Delta\mathbf{Y}_{k}(s,i)|^{p}\right.\\
&\qquad+\left.\left(\int_{t}^{t+\varepsilon}|f_{k}(s,i,\Delta\mathbf{X}_{k}(s,i),\Delta\mathbf{Y}_{k}(s,i),\Delta\mathbf{Z}_{k}(s,i))|ds\right)^{p}\right]\\
&+C_{p}K^{p}\textstyle\sum_{j\neq i}\mathbb{E}\left[\int_{t}^{t+\varepsilon}|\Delta\widetilde{P}_{k-1}(s,j)\mathbf{X}_{k-1}(s,i)|^{p}ds\right]\\
\leq&C_{p}\mathbb{E}\left[|\Delta\widetilde{P}_{k}(t+\varepsilon,i)\mathbf{X}_{k}(t+\varepsilon,i)|^{2}\right]\\
&+(C_{p}+C_{p}\varepsilon^{p})\mathbb{E}\left[\sup_{t\leq s\leq t+\varepsilon}|\Delta\mathbf{X}_{k}(s,i)|^{p}\right]\\
&+C_{p}\varepsilon^{\frac{p}{2}}\mathbb{E}\left[\left(\int_{t}^{t+\varepsilon}|\Delta\mathbf{Z}_{k}(s,i)|^{2}ds\right)^{\frac{p}{2}}\right]\\
&+C_{p}K^{p}\textstyle\sum_{j\neq i}\mathbb{E}\left[\int_{t}^{t+\varepsilon}|\Delta\widetilde{P}_{k-1}(s,j)\mathbf{X}_{k-1}(s,i)|^{p}ds\right],
\end{aligned}
\end{equation}
where $C_{p}$ stands for a positive constant depending on $p,L$ and the uniform bound of $\widetilde{P}_{k-1}$, changing from line to line. By choosing $0<\varepsilon\leq 1$ such that
\begin{equation}\label{varepsilon}
1-C_{p}\varepsilon^{\frac{p}{2}}>0,
\end{equation}
we can obtain that for any $0<\tau\leq\varepsilon$,
\begin{equation}\label{Z}
\begin{aligned}
\mathbb{E}\left[\left(\int_{0}^{\tau}|\Delta\mathbf{Z}_{k}(t,i)|^{2}dt\right)^{\frac{p}{2}}\right]\leq&C_{p}\mathbb{E}\left[|\Delta\widetilde{P}_{k}(\tau,i)\mathbf{X}_{k}(\tau,i)|^{p}\right]\\
&+(C_{p}+C_{p}\varepsilon^{p})\mathbb{E}\left[\sup_{0\leq t\leq\tau}|\Delta\mathbf{X}_{k}(t,i)|^{p}\right]\\
&+C_{p}K^{p}\textstyle\sum_{j\neq i}\mathbb{E}\left[\int_{0}^{\tau}|\Delta\widetilde{P}_{k-1}(t,j)\mathbf{X}_{k-1}(t,i)|^{p}dt\right].
\end{aligned}
\end{equation}
As for the forward part of \eqref{FBSDE-3}, we also use the Burkholder-Davis-Gundy inequality and obtain that
\begin{equation*}
\begin{aligned}
\mathbb{E}\left[\sup\limits_{0\leq t\leq\epsilon}|\Delta\mathbf{X}_{k}(t,i)|^{p}\right]\leq&C_{p}\mathbb{E}\left[\left(\int_{0}^{\epsilon}|b(t,i,\Delta\mathbf{X}_{k}(t,i),\Delta\mathbf{Y}_{k}(t,i),\Delta\mathbf{Z}_{k}(t,i))|dt\right)^{p}\right]\\
&+C_{p}\mathbb{E}\left[\left(\int_{0}^{\epsilon}|\sigma(t,i,\Delta\mathbf{X}_{k}(t,i),\Delta\mathbf{Y}_{k}(t,i),\Delta\mathbf{Z}_{k}(t,i))|^{2}dt\right)^{\frac{p}{2}}\right]\\
\leq&C_{p}(\epsilon^{p}+\epsilon^{\frac{p}{2}})\mathbb{E}\left[\sup\limits_{0\leq t\leq\epsilon}|\Delta\mathbf{X}_{k}(t,i)|^{p}+\sup\limits_{0\leq t\leq\epsilon}|\Delta\mathbf{Y}_{k}(t,i)|^{p}\right]\\
&+C_{p}(\epsilon^{\frac{p}{2}}+L_{\sigma,z}^{p})\mathbb{E}\left[\left(\int_{0}^{\epsilon}|\Delta\mathbf{Z}_{k}(t,i)|^{2}dt\right)^{\frac{p}{2}}\right]\\
\leq&C_{p}(\epsilon^{\frac{p}{2}}+L_{\sigma,z}^{p})\mathbb{E}\left[\sup\limits_{0\leq t\leq\epsilon}|\Delta\mathbf{X}_{k}(t,i)|^{p}\right]\\
&+C_{p}K^{p}(\epsilon^{\frac{p}{2}}+L_{\sigma,z}^{p})\textstyle\sum_{j\neq i}\mathbb{E}\left[\int_{0}^{\varepsilon}|\Delta\widetilde{P}_{k-1}(t,j)\mathbf{X}_{k-1}(t,i)|^{p}dt\right].
\end{aligned}
\end{equation*}
Let $\widehat{L}_{\sigma,p}>0$ be sufficiently small such that
\begin{equation}\label{condition1}
\begin{aligned}
C_{p}\widehat{L}_{\sigma,p}^{p}<1.
\end{aligned}
\end{equation}
When $L_{\sigma,z}\leq\widehat{L}_{\sigma,p}$, then there exists sufficiently small $\epsilon>0$ such that
\begin{equation}\label{epsilon}
\begin{aligned}
1-C_{p}(\epsilon^{\frac{p}{2}}+L_{\sigma,z}^{p})>0.
\end{aligned}
\end{equation}
Now we choose
$$
\tau=\varepsilon\wedge\epsilon
$$
and obtain that
\begin{equation*}
\begin{aligned}
\mathbb{E}\left[\sup\limits_{0\leq t\leq\tau}|\Delta\mathbf{X}_{k}(t,i)|^{p}\right]\leq&C_{p}\mathbb{E}\left[|\Delta\widetilde{P}_{k}(\tau,i)\mathbf{X}_{k}(\tau,i)|^{p}\right]\\
&+C_{p}K^{p}\textstyle\sum_{j\neq i}\mathbb{E}\left[\int_{0}^{\tau}|\Delta\widetilde{P}_{k-1}(t,j)\mathbf{X}_{k-1}(t,i)|^{p}dt\right]
\end{aligned}
\end{equation*}
where $C_{p}$ is still a positive constant only depending on $p,L$ and the uniform bound of $\widetilde{P}_{k-1}$. Thus there exists a constant $\widetilde{L}_{\sigma,p}$ such that 
\begin{equation*}
  \widetilde{L}_{\sigma,p}\leq \min\left\{\sigma_0,\bar{L}_{\sigma,2p},\widehat{L}_{\sigma,p}\right\}
\end{equation*}
and when $L_{\sigma,z}\leq \widetilde{L}_{\sigma,p}$, it holds that for any $i,j\in\mathcal{M}$,
\begin{align*}
&\mathbb{E}\left[\int_{0}^{\tau}|\Delta \widetilde{P}_{k-1}(t,j)\mathbf{X}_{k-1}(t,i)|^{p}dt\right]\leq \mathbb{E}\left[\int_{0}^{\tau}|\Delta \widetilde{P}_{k-1}(t,j)|^{2p}dt\right]^{\frac{1}{2}}\mathbb{E}\left[\sup\limits_{0\leq t\leq\tau}|\mathbf{X}_{k-1}(t,i)|^{2p}\right]^{\frac{1}{2}}\rightarrow0,\\
&\mathbb{E}\left[|\Delta\widetilde{P}_{k}(\tau,i)\mathbf{X}_{k-1}(\tau,i)|^{p}\right]\leq \mathbb{E}\left[|\Delta\widetilde{P}_{k}(\tau,i)|^{2p}\right]^{\frac{1}{2}}\mathbb{E}\left[\sup\limits_{0\leq t\leq \tau}|\mathbf{X}_{k-1}(t,i)|^{2p}\right]^{\frac{1}{2}}\rightarrow0~as~k\rightarrow\infty.
\end{align*}
Thus, we obtain
\begin{align*}
\lim_{k\rightarrow\infty}\mathbb{E}\left[\sup\limits_{0\leq t\leq\tau}|\Delta\mathbf{X}_{k}(t,i)|^{p}\right]=0.
\end{align*}
Similarly, from \eqref{Y} and \eqref{Z}, we can respectively obtain that for any $i\in\mathcal{M}$
$$
\lim_{k\rightarrow\infty}\mathbb{E}\left[\sup\limits_{0\leq t\leq\tau}|\Delta\mathbf{Y}_{k}(t,i)|^{p}\right]=0,~~\lim_{k\rightarrow\infty}\mathbb{E}\left[\left(\int_{0}^{\tau}|\Delta\mathbf{Z}_{k}(t,i)|^{2}dt\right)^{\frac{p}{2}}\right]=0.
$$
Similarly, it holds that
$$
\lim_{k\rightarrow\infty}\mathbb{E}\left[\sup\limits_{\tau\leq t\leq2\tau}|\Delta\mathbf{Y}_{k}(t,i)|^{p}\right]=0,\lim_{k\rightarrow\infty}\mathbb{E}\left[\sup\limits_{\tau\leq t\leq2\tau}|\Delta\mathbf{Y}_{k}(t,i)|^{p}\right]=0,\lim_{k\rightarrow\infty}\mathbb{E}\left[\left(\int_{\tau}^{2\tau}|\Delta\mathbf{Z}_{k}(t,i)|^{2}dt\right)^{\frac{p}{2}}\right]=0.
$$
By induction, we get
\begin{equation}\label{p}
\lim_{k\rightarrow\infty}\mathbb{E}\left[\sup\limits_{0\leq t\leq T}|\Delta\mathbf{Y}_{k}(t,i)|^{p}\right]=0,\lim_{k\rightarrow\infty}\mathbb{E}\left[\sup\limits_{0\leq t\leq T}|\Delta\mathbf{Y}_{k}(t,i)|^{p}\right]=0,\lim_{k\rightarrow\infty}\mathbb{E}\left[\left(\int_{0}^{T}|\Delta\mathbf{Z}_{k}(t,i)|^{2}dt\right)^{\frac{p}{2}}\right]=0.
\end{equation}Since
$$
\Delta\mathbf{u}_{k}(t,i)=-\widetilde{R}(t,i)^{-1}\left(B(t,i)^{\top}\Delta\mathbf{Y}_{k}(t,i)+D(t,i)^{\top}\Delta\mathbf{Z}_{k}(t,i)+\widetilde{S}(t,i)\Delta\mathbf{X}_{k}(t,i)\right),~ k\geq1,~i\in\mathcal{M},
$$
it is easy to verify that 
$$
\lim_{k\rightarrow\infty}\mathbb{E}\left[\left(\int_{0}^{T}|\Delta\mathbf{u}_{k}(t,i)|^{2}dt\right)^{\frac{p}{2}}\right]=0.
$$
$\textbf{Step 4. (Convergence of $\left(P_{k},\Lambda_{k}\right)$ in $L_{\mathcal{F}^{W}}^{r}(\Omega;C(0,T;\mathbb{S}^{n}))\times L_{\mathcal{F}^{W}}^{r}(0,T;\mathbb{S}^{n})$ for some $r>2$)}$ For $k\geq1,i\in\mathcal{M}$, it follows from \cite{Gal 1979} and \cite{Tang 2003} that the inverse of $\mathbf{X}_{k}(t,i)$ (denoted by $\mathbf{X}_{k}(t,i)^{-1}$) exists and satisfies the following linear matrix-valued SDE:
\begin{equation}\label{X-1}
\left\{
\begin{array}{l}
d\mathbf{X}_{k}(t,i)^{-1}=-\mathbf{X}_{k}(t,i)^{-1}\left[A_{k}(t,i)-C_{k}(t,i)^{2}\right]dt-\mathbf{X}_{k}(t,i)^{-1}C_{k}(t,i)dW(t),\quad t\in[0,T],\\
\mathbf{X}_{k}(0,i)^{-1}=\mathbf{I}_{n},
\end{array}
\right.
\end{equation}
where
\begin{align*}
A_{k}(t,i)=&A(t,i)-B(t,i)[\widetilde{R}(t,i)+D(t,i)\widetilde{P}_{k}(t,i)D(t,i)^{\top}]^{-1}\\
&\cdot[B(t,i)^{\top}\widetilde{P}_{k}(t,i)+D(t,i)^{\top}\widetilde{P}_{k}(t,i)C(t,i)+D(t,i)^{\top}\widetilde{\Lambda}_{k}(t,i)+\widetilde{S}(t,i)],\\
C_{k}(t,i)=&C(t,i)-D(t,i)[\widetilde{R}(t,i)+D(t,i)\widetilde{P}_{k}(t,i)D(t,i)^{\top}]^{-1}\\
&\cdot[B(t,i)^{\top}\widetilde{P}_{k}(t,i)+D(t,i)^{\top}\widetilde{P}_{k}(t,i)C(t,i)+D(t,i)^{\top}\widetilde{\Lambda}_{k}(t,i)+\widetilde{S}(t,i)].
\end{align*}
We consider the following SDEs:
\begin{equation}\label{norm-0}
\left\{
\begin{array}{l}
\begin{aligned}
d\left|\mathbf{X}_{k}(t,i)^{-1}\right|^{2}=&tr\left(\left[\mathbf{X}_{k}(t,i)^{-1}\right]^{\top}\mathbf{X}_{k}(t,i)^{-1}\left[C_{k}(t,i)C_{k}(t,i)^{\top}-A_{k}(t,i)-A_{k}(t,i)^{\top}-C_{k}(t,i)^{2}\right.\right.\\
&\left.\left.-(C_{k}(t,i)^{2})^{\top}\right]\right) dt-tr\left(\left[\mathbf{X}_{k}(t,i)^{-1}\right]^{\top}\mathbf{X}_{k}(t,i)^{-1}\left[C_{k}(t,i)+C_{k}(t,i)^{\top}\right]\right) dW(t),
\end{aligned}\\
\left|\mathbf{X}_{k}(0,i)^{-1}\right|^{2}=n,\quad k\geq1,\quad i\in\mathcal{M},\quad t\in[0,T].
\end{array}
\right.
\end{equation}
From assumptions and the uniform boundedness of $\widetilde{P}_{k}(i)$, we rewrite \eqref{norm-0} as
\begin{equation}\label{norm-1}
\left\{
\begin{array}{l}
\begin{aligned}
d\left|\mathbf{X}_{k}(t,i)^{-1}\right|^{2}=&\left|\mathbf{X}_{k}(t,i)^{-1}\right|^{2}\left(\alpha_{k}(t,i)L_{\sigma,z}^{2}|\widetilde{\Lambda}_{k}(t,i)|^{2}+\beta_{k}(t,i)|\widetilde{\Lambda}_{k}(t,i)|+\gamma_{k}(t,i)\right)dt\\
&+\left|\mathbf{X}_{k}(t,i)^{-1}\right|^{2}\left(\delta_{k}(t,i)L_{\sigma,z}|\widetilde{\Lambda}_{k}(t,i)|+\eta_{k}(t,i)\right)dW(t),
\end{aligned}\\
\left|\mathbf{X}_{k}(0,i)^{-1}\right|^{2}=n,\quad k\geq1,\quad i\in\mathcal{M},\quad t\in[0,T]
\end{array}
\right.
\end{equation}
by Lemma \ref{le-a}, where $\alpha_{k}(t,i),\beta_{k}(t,i),\gamma_{k}(t,i),\delta_{k}(t,i),\eta_{k}(t,i)$ are uniformly bounded with respect to $i$ and $k$. Then we can obtain that
\begin{align*}
&\left|\mathbf{X}_{k}(t,i)^{-1}\right|^{2}\\
=&n\exp\left\{\int_{0}^{t}\left[\alpha_{k}(s,i)L_{\sigma,z}^{2}|\widetilde{\Lambda}_{k}(s,i)|^{2}+\beta_{k}(s,i)|\widetilde{\Lambda}_{k}(s,i)|+\gamma_{k}(s,i)-\frac{1}{2}\left(\delta_{k}(s,i)L_{\sigma,z}|\widetilde{\Lambda}_{k}(s,i)|+\eta_{k}(s,i)\right)^{2}\right]ds\right.\\
&\left.\qquad+\int_{0}^{t}\left(\delta_{k}(s,i)L_{\sigma,z}|\widetilde{\Lambda}_{k}(s,i)|+\eta_{k}(s,i)\right)dW(s)\right\}.
\end{align*}
Take $q_2>2$, and let $\tilde{L}_{\sigma}>0$ be sufficiently small such that for each $i\in\mathcal{M}$ and $k\geq1$,
\begin{equation*}
\begin{aligned}
&\|\delta_{k}(\cdot,i)\|_{L_{\mathcal{F}^{W}}^{\infty}(0,T;\mathbb{R})}\tilde{L}_{\sigma}\|\widetilde{\Lambda}_{k}(i)\|_{L_{\mathcal{F}^{W}}^{2,\mathrm{BMO}}(0,T;\mathbb{S}^{n})}<\Phi(q_{2}),\\
&\sqrt{q_{2}\|\alpha_{k}(\cdot,i)\|_{L_{\mathcal{F}^{W}}^{\infty}(0,T;\mathbb{R})}}\tilde{L}_{\sigma}\|\widetilde{\Lambda}_{k}(i)\|_{L_{\mathcal{F}^{W}}^{2,\mathrm{BMO}}(0,T;\mathbb{S}^{n})}<1,
\end{aligned}
\end{equation*}
where
$$
\Phi(x)=\sqrt{1+\frac{1}{x^{2}}\log(\frac{2x-1}{2(x-1)})}-1,~1<x<\infty.
$$
When $L_{\sigma,z}\leq \tilde{L}_{\sigma}$, by H\"older's inequality it holds that for $2<q<q_{1}<q_{2}$,
\begin{align}\label{q}
&\mathbb{E}\left[\sup_{0\leq t\leq T}|\mathbf{X}_{k}(t,i)^{-1}|^{q}\right]\nonumber\\
=&\mathbb{E}\left[\sup_{0\leq t\leq T}n^{\frac{q}{2}}\exp\left\{\int_{0}^{t}\left[\frac{q}{2}\gamma_{k}(s,i)-\frac{q}{4}\eta_{k}(s,i)^{2}\right]ds+\int_{0}^{t}\frac{q}{2}\eta_{k}(s,i)dW(s)\right\}\cdot\exp\left\{\int_{0}^{t}\frac{q}{2}\delta_{k}(s,i)L_{\sigma}|\widetilde{\Lambda}_{k}(s,i)|dW(s)\right\}\right.\nonumber\\
&\cdot\left.\exp\left\{\int_{0}^{t}\left[\left(\frac{q}{2}\alpha_{k}(s,i)-\frac{q}{4}\delta_{k}(s,i)^{2}\right)L_{\sigma}^{2}|\widetilde{\Lambda}_{k}(s,i)|^{2}+\frac{q}{2}\left(\beta_{k}(s,i)-\delta_{k}(s,i)\eta_{k}(s,i)L_{\sigma}\right)|\widetilde{\Lambda}_{k}(s,i)|\right]ds\right\}\right]\nonumber\\
\leq&N\mathbb{E}\left[\sup_{0\leq t\leq T}\exp\left\{\int_{0}^{t}\frac{q_{1}}{2}\delta_{k}(s,i)L_{\sigma}|\widetilde{\Lambda}_{k}(s,i)|dW(s)\right\}\right.\nonumber\\
&\cdot\left.\exp\left\{\int_{0}^{t}\left[\left(\frac{q_{1}}{2}\alpha_{k}(s,i)-\frac{q_{1}}{4}\delta_{k}(s,i)^{2}\right)L_{\sigma}^{2}|\widetilde{\Lambda}_{k}(s,i)|^{2}+\frac{q_{1}}{2}\left(\beta_{k}(s,i)-\delta_{k}(s,i)\eta_{k}(s,i)L_{\sigma}\right)|\widetilde{\Lambda}_{k}(s,i)|\right]ds\right\}\right]^{\frac{q}{q_1}}\nonumber\\
\leq&N\mathbb{E}\left[\sup_{0\leq t\leq T}\left\{\mathcal{E}_{t}\left(\delta(i)L_{\sigma}|\widetilde{\Lambda}_{k}(i)|\cdot W\right)\right\}^{\frac{q_{1}}{2}}\cdot\exp\left\{\int_{0}^{t}\frac{q_{1}}{2}\alpha_{k}(s,i)L_{\sigma}^{2}|\widetilde{\Lambda}_{k}(s,i)|^{2}ds\right\}\right.\nonumber\\
&\cdot\left.\exp\left\{\int_{0}^{t}\left[\frac{q_1 \kappa^{2}}{2}|\widetilde{\Lambda}_{k}(s,i)|^{2}+\frac{q_{1}\left(\beta_{k}(s,i)-\delta_{k}(s,i)\eta_{k}(s,i)L_{\sigma}\right)^{2}}{8\kappa^{2}}\right]ds\right\}\right]^{\frac{q}{q_1}}\nonumber\\
\leq&N\mathbb{E}\left[\sup_{0\leq t\leq T}\left\{\mathcal{E}_{t}\left(\delta(i)L_{\sigma}|\widetilde{\Lambda}_{k}(i)|\cdot W\right)\right\}^{\frac{q_{2}}{2}}\cdot\exp\left\{\int_{0}^{t}\frac{q_{2}}{2}\left(\alpha(s)L_{\sigma}^{2}+\kappa^2\right)|\widetilde{\Lambda}_{k}(s,i)|^{2}ds\right\}\right]^{\frac{q}{q_2}}\nonumber\\
\leq&N\mathbb{E}\left[\sup_{0\leq t\leq T}\left\{\mathcal{E}_{t}\left(\delta(i)L_{\sigma}|\widetilde{\Lambda}_{k}(i)|\cdot W\right)\right\}^{q_{2}}\right]^{\frac{q}{2q_2}}\mathbb{E}\left[\sup_{0\leq t\leq T}\exp\left\{\int_{0}^{t}q_{2}\alpha_{k}(s,i)L_{\sigma}^{2}|\widetilde{\Lambda}_{k}(s,i)|^{2}ds\right\}\right]^{\frac{q}{2q_2}}\nonumber\\
\leq&N,
\end{align}
where $\kappa$ is a sufficiently small positive constant, $N$ is positive constant changing from line to line and the last inequality follows from \cite[Theorem 2.2 and Theorem 3.1]{Kazamaki 1994}.

Recalling \eqref{Y=PX}, for $k\geq 1,i\in\mathcal{M},t\in[0,T]$, we have
\begin{equation*}
\begin{aligned}
\widetilde{P}_{k}(t,i)=&\mathbf{Y}_{k}(t,i)\mathbf{X}_{k}(t,i)^{-1},\\
\widetilde{\Lambda}_{k}(t,i)=&\mathbf{Z}_{k}(t,i)\mathbf{X}_{k}(t,i)^{-1}-\widetilde{P}_{k}(t,i)C(t,i)-\widetilde{P}_{k}(t,i)D(t,i)\mathbf{u}_{k}(t,i)\mathbf{X}_{k}(t,i)^{-1}
\end{aligned}
\end{equation*}
and furthermore
\begin{equation*}
\begin{aligned}
\Delta\widetilde{P}_{k+1}(t,i)=&\Delta\mathbf{Y}_{k+1}(t,i)\mathbf{X}_{k+1}(t,i)^{-1}-\widetilde{P}_{k}(t,i)\Delta\mathbf{X}_{k+1}(t,i)\mathbf{X}_{k+1}(t,i)^{-1},\\
\Delta \widetilde{\Lambda}_{k+1}(t,i)=&\Delta\mathbf{Z}_{k+1}(t,i)\mathbf{X}_{k+1}(t,i)^{-1}-\mathbf{Z}_{k}(t,i)\mathbf{X}_{k}(t,i)^{-1}\Delta\mathbf{X}_{k+1}(t,i)\mathbf{X}_{k+1}(t,i)^{-1}\\
&-\Delta\widetilde{P}_{k+1}(t,i)C(t,i)-\Delta \widetilde{P}_{k+1}(t,i)D(t,i)\mathbf{u}_{k+1}(t,i)\mathbf{X}_{k+1}(t,i)^{-1}\\
&-\widetilde{P}_{k}(t,i)D(t,i)\Delta\mathbf{u}_{k+1}(t,i)\mathbf{X}_{k+1}(t,i)^{-1}\\
&+\widetilde{P}_{k}(t,i)D(t,i)\mathbf{u}_{k}(t,i)\mathbf{X}_{k}(t,i)^{-1}\Delta\mathbf{X}_{k+1}(t,i)\mathbf{X}_{k+1}(t,i)^{-1}\\
=&\Delta\mathbf{Z}_{k+1}(t,i)\mathbf{X}_{k+1}(t,i)^{-1}-\widetilde{\Lambda}_{k}(t,i)\Delta\mathbf{X}_{k+1}(t,i)\mathbf{X}_{k+1}(t,i)^{-1}\\
&-\Delta\widetilde{P}_{k+1}(t,i)C(t,i)-\Delta\widetilde{P}_{k+1}(t,i)D(t,i)\mathbf{u}_{k+1}(t,i)\mathbf{X}_{k+1}(t,i)^{-1}\\
&-\widetilde{P}_{k}(t,i)D(t,i)\Delta\mathbf{u}_{k+1}(t,i)\mathbf{X}_{k+1}(t,i)^{-1}-\widetilde{P}_{k}(t,i)C(t,i)\Delta\mathbf{X}_{k+1}(t,i)\mathbf{X}_{k+1}(t,i)^{-1}\\
\end{aligned}
\end{equation*}
 Taking some $p>2$ and letting $r'=pq/(p+q)$, $2<r<r'$ and $u>2$ satisfying $\frac{1}{p}+\frac{1}{q}+\frac{1}{u}=\frac{1}{r}$. Then there exists a constant $L_\sigma>0$ such that
 \begin{equation*}
  L_{\sigma}\leq \min\left\{\widetilde{L}_{\sigma,p},\tilde{L}_{\sigma}\right\}
 \end{equation*}
 and when $L_{\sigma,z}\leq L_\sigma$, by H\"older's inequality, it holds that
\begin{align*}
\mathbb{E}\left[\sup_{0\leq t\leq T}\left|\Delta\widetilde{P}_{k+1}(t,i)\right|^{r}\right]\leq&\mathbb{E}\left[\sup_{0\leq t\leq T}\left|\Delta\widetilde{P}_{k+1}(t,i)\right|^{r'}\right]^{\frac{r}{r'}}\\
\leq&a_{r'}\mathbb{E}\left[\sup_{0\leq t\leq T}\left|\Delta\mathbf{Y}_{k+1}(t,i)\mathbf{X}_{k+1}(t, i)^{-1}\right|^{r'}\right]^{\frac{r}{r'}}\\
&\quad+a_{r'}\mathbb{E}\left[\sup_{0\leq t\leq T}\left|\Delta\mathbf{X}_{k+1}(t,i)\mathbf{X}_{k+1}(t,i)^{-1}\right|^{r'}\right]^{\frac{r}{r'}}\\
\leq&a_{r'}\mathbb{E}\left[\sup_{0\leq t\leq T}\left|\Delta\mathbf{Y}_{k+1}(t,i)\right|^{p}\right]^{\frac{r}{p}}\mathbb{E}\left[\sup_{0\leq t\leq T}\left|\mathbf{X}_{k+1}(t, i)^{-1}\right|^{q}\right]^{\frac{r}{q}}\\
&\quad+a_{r'}\mathbb{E}\left[\sup_{0\leq t\leq T}\left|\Delta\mathbf{X}_{k+1}(t,i)\right|^{p}\right]^{\frac{r}{p}}\mathbb{E}\left[\sup_{0\leq t\leq T}\left|\mathbf{X}_{k+1}(t, i)^{-1}\right|^{q}\right]^{\frac{r}{q}}
\end{align*}
and
\begin{align*}
&\mathbb{E}\left[\left(\int_{0}^{T}\left|\Delta\widetilde{\Lambda}_{k+1}(t,i)\right|^{2}dt\right)^{\frac{r}{2}}\right]\\
\leq&b_{r}\left\{\mathbb{E}\left[\left(\int_{0}^{T}\left|\Delta\mathbf{Z}_{k+1}(t,i)\mathbf{X}_{k+1}(t,i)^{-1}\right|^{2}dt\right)^{\frac{r}{2}}\right]
+\mathbb{E}\left[\left(\int_{0}^{T}\left|\widetilde{\Lambda}_{k}(t,i)\Delta\mathbf{X}_{k+1}(t,i)\mathbf{X}_{k+1}(t,i)^{-1}\right|^{2}dt\right)^{\frac{r}{2}}\right]\right.\\
&+\mathbb{E}\left[\left(\int_{0}^{T}\left|\Delta\widetilde{P}_{k+1}(t,i)C(t,i)\right|^{2}dt\right)^{\frac{r}{2}}\right]
+\mathbb{E}\left[\left(\int_{0}^{T}\left|\Delta\widetilde{P}_{k+1}(t,i)D(t,i)\mathbf{u}_{k+1}(t,i)\mathbf{X}_{k+1}(t,i)^{-1}\right|^{2}dt\right)^{\frac{r}{2}}\right]\\
&+\mathbb{E}\left[\left(\int_{0}^{T}\left|\widetilde{P}_{k}(t,i)D(t,i)\Delta\mathbf{u}_{k+1}(t,i)\mathbf{X}_{k+1}(t,i)^{-1}\right|^{2}dt\right)^{\frac{r}{2}}\right]\\
&+\left.\mathbb{E}\left[\left(\int_{0}^{T}\left|\widetilde{P}_{k}(t,i)C(t,i)\Delta\mathbf{X}_{k+1}(t,i)\mathbf{X}_{k+1}(t,i)^{-1}\right|^{2}dt\right)^{\frac{r}{2}}\right]\right\}\\
\leq&b_{r}\left\{\mathbb{E}\left[\left(\int_{0}^{T}\left|\Delta\mathbf{Z}_{k+1}(t,i)\right|^{2}dt\right)^\frac{p}{2}\right]^{\frac{r}{p}}\mathbb{E}\left[\sup_{0\leq t\leq T}\left|\mathbf{X}_{k+1}(t,i)^{-1}\right|^{q}\right]^{\frac{r}{q}}\right.\\
&+\mathbb{E}\left[\left(\int_{0}^{T}\left|\widetilde{\Lambda}_{k}(t,i)\right|^{2}ds\right)^\frac{u}{2}\right]^{\frac{r}{u}}\mathbb{E}\left[\sup_{0\leq t\leq T}\left|\Delta\mathbf{X}_{k+1}(t, i)\right|^{p}\right]^{\frac{r}{p}}\mathbb{E}\left[\sup_{0\leq t\leq T}\left|\mathbf{X}_{k+1}(t,i)^{-1}\right|^{q}\right]^{\frac{r}{q}}\\
&+\mathbb{E}\left[\sup_{0\leq t\leq T}\left|\Delta\widetilde{P}_{k+1}(t,i)\right|^{r}\right]\\
&+\mathbb{E}\left[\left(\int_{0}^{T}\left|\Delta\widetilde{P}_{k+1}(t,i)\right|^{u}ds\right)\right]^{\frac{r}{u}}\mathbb{E}\left[\sup_{0\leq t\leq T}\left|\mathbf{u}_{k+1}(t, i)\right|^{p}\right]^{\frac{r}{p}}\mathbb{E}\left[\sup_{0\leq t\leq T}\left|\mathbf{X}_{k+1}(t,i)^{-1}\right|^{q}\right]^{\frac{r}{q}}\\
&+\mathbb{E}\left[\left(\int_{0}^{T}\left|\Delta\mathbf{u}_{k+1}(t,i)\right|^{2}dt\right)^\frac{p}{2}\right]^{\frac{r}{p}}\mathbb{E}\left[\sup_{0\leq t\leq T}\left|\mathbf{X}_{k+1}(t,i)^{-1}\right|^{q}\right]^{\frac{r}{q}}\\
&+\left.\mathbb{E}\left[\sup_{0\leq t\leq T}\left|\Delta\mathbf{X}_{k+1}(t,i)\right|^{p}\right]^{\frac{r}{p}}\mathbb{E}\left[\sup_{0\leq t\leq T}\left|\mathbf{X}_{k+1}(t,i)^{-1}\right|^{q}\right]^{\frac{r}{q}}\right\}
\end{align*}
where $a_{r'},b_{r}$ are deterministic positive constants, changing from line to line.

Recalling the convergence of $\left\{\mathbf{X}_{k}(i),\mathbf{Y}_{k}(i),\mathbf{Z}_{k}(i),\mathbf{u}_{k}(i)\right\}_{k\geq1}$ establied in Step 3, we get that $\left(P_{k}(i),\Lambda_{k}(i)\right)$ converges in $L_{\mathcal{F}^{W}}^{r}(\Omega;C(0,T;\mathbb{S}^{n}))\times L_{\mathcal{F}^{W}}^{r}(0,T;\mathbb{S}^{n})$ as $k\rightarrow\infty$ for each $i\in\mathcal{M}$ and $r>2$, i.e. there exists a pair of adapted processes $\left(P(i),\Lambda(i)\right)$ such that
$$
\lim_{k\rightarrow\infty}\mathbb{E}\left[\sup_{0\leq t\leq T}\left|\widetilde{P}_{k}(t,i)-\widetilde{P}(t,i)\right|^{r}\right]=0,\quad \lim_{k\rightarrow\infty}\mathbb{E}\left[\left(\int_{0}^{T}\left|\widetilde{\Lambda}_{k}(t,i)-\widetilde{\Lambda}(t,i)\right|^{2}dt\right)^{\frac{r}{2}}\right]=0.
$$
Now letting $k\rightarrow\infty$ in \eqref{overlinePk} and taking the following transform
$$
P(t,i)=e^{-q_{ii}t}\widetilde{P}(t,i),\quad\Lambda(t,i)=e^{-q_{ii}t}\widetilde{\Lambda}(t,i),\qquad t\in[0,T],~i\in\mathcal{M},
$$
we obtain the existence of the solution for ESRE \eqref{Riccati}.\\
\\
$\textbf{Step 5. ($\left(P_{k},\Lambda_{k}\right)$ belongs to $L_{\mathcal{F}^{W}}^{\infty}(0,T;\mathbb{S}^{n})\times L_{\mathcal{F}^{W}}^{2,\mathrm{BMO}}(0,T;\mathbb{S}^{n})$)}$ We consider the following equivalent form of ESRE \eqref{Riccati}:
\begin{equation}\label{Riccati-1}
\left\{
\begin{array}{l}
\begin{aligned}
d\widetilde{P}(t,i)=&-[\Pi(t,i,\widetilde{P},\widetilde{\Lambda})+\widetilde{Q}(t,i)+H(t,i,\widetilde{P},\widetilde{\Lambda},\widetilde{R},\widetilde{S})\\
&\quad+\textstyle\sum_{j\neq i}q_{ij}e^{(q_{ii}-q_{jj})t}\widetilde{P}(t,j)]dt+\widetilde{\Lambda}(t,i)dW(t),
\end{aligned}\\
\widetilde{P}(T,i)=\widetilde{G}(i),\quad i\in\mathcal{M},~t\in[0,T],
\end{array}
\right.
\end{equation}
Similar to \eqref{rho}, we can obtain that for any $\beta>0$,
\begin{equation*}
\begin{aligned}
e^{\beta t}|\widetilde{P}(t,i)|^{2}=&e^{\beta T}|\widetilde{G}(i)|^{2}+\int_{t}^{T}e^{\beta s}\left[-\beta|\widetilde{P}(s,i)|^{2}-|\widetilde{\Lambda}(s,i)|^{2}+2\langle \widetilde{P}(s,i),\Pi(s,i,\widetilde{P},\widetilde{\Lambda})+\widetilde{Q}(s,i)\right.\\
&\left.+H(s,i,\widetilde{P},\widetilde{\Lambda},\widetilde{R},\widetilde{S})+\textstyle\sum_{j\neq i}q_{ij}e^{(q_{ii}-q_{jj})s}\widetilde{P}(s,j)\rangle\right]ds-2\int_{t}^{T}e^{\beta s}\langle\widetilde{P}(s,i),\widetilde{\Lambda}(s,i)\rangle dW(s).
\end{aligned}
\end{equation*}
Taking conditional expectation, it holds for any $t\in[0,T]$ and each $i\in\mathcal{M}$ that
\begin{align*}
&e^{\beta t}|\widetilde{P}(t,i)|^{2}+\beta \mathbb{E}_{t}\left[\int_{t}^{T}e^{\beta s}|\widetilde{P}(s,i)|^{2}ds\right]+\mathbb{E}_{t}\left[\int_{t}^{T}e^{\beta s}|\widetilde{\Lambda}(s,i)|^{2}ds\right]\\
&\leq \mathbb{E}_{t}\left[e^{\beta T}|\widetilde{G}(i)|^{2}\right]+2K\mathbb{E}_{t}\left[\int_{t}^{T}e^{\beta s}|\widetilde{P}(s,i)|\left(\textstyle\sum_{j=1}^{l}|\widetilde{P}(s,j)|+|\widetilde{\Lambda}(s,i)|+1\right)ds\right]\\
&\leq \mathbb{E}_{t}\left[e^{\beta T}|\widetilde{G}(i)|^{2}\right]+(3K^{2}+lK)\mathbb{E}_{t}\left[\int_{t}^{T}e^{\beta s}|\widetilde{P}(s,i)|^{2}ds\right]+\frac{1}{2}\mathbb{E}_{t}\left[\int_{t}^{T}e^{\beta s}|\widetilde{\Lambda}(s,i)|^{2}ds\right]\\
&\quad+K\textstyle\sum_{j=1}^{l}\mathbb{E}_{t}\left[\int_{t}^{T}e^{\beta s}|\widetilde{P}(s,j)|^{2}ds\right]+\mathbb{E}_{t}\left[\int_{t}^{T}e^{\beta s}ds\right].
\end{align*}
We choose $\beta=3K^{2}+2lK$ and immediately obtain that
\begin{align*}
\sum\limits_{i=1}^{l}e^{\beta t}|\widetilde{P}(t,i)|^{2}+\frac{1}{2}\sum\limits_{i=1}^{l}\mathbb{E}_{t}\left[\int_{t}^{T}e^{\beta s}|\widetilde{\Lambda}(s,i)|^{2}ds\right]\leq le^{\beta T}(K^{2}+\frac{1}{\beta}),\quad \forall t\in[0,T],
\end{align*}
which implies $(\widetilde{P}(i),\widetilde{\Lambda}(i))$ belongs to $L_{\mathcal{F}^{W}}^{\infty}(0,T;\mathbb{S}^{n})\times L_{\mathcal{F}^{W}}^{2,\mathrm{BMO}}(0,T;\mathbb{S}^{n})$, so does $(P(i),\Lambda(i))$ for any $i\in\mathcal{M}$.\\
$\textbf{Step 6. ($P(i)\geq 0$)}$ Finally, $P(t,i)=\lim\limits_{k\rightarrow\infty}e^{-q_{ii}t}\widetilde{P}_{k}(t,i)\geq0$ for $i\in\mathcal{M},t\in[0,T]$.
\end{proof}

\section{Optimal control of Problem(SLQ)}
From the construction of a solution of ESRE \eqref{Riccati}, we obatin the optimal control in feedback form and show that it is indeed admissible. Thus we finally solve Problem(SLQ).\\ 
\begin{proof}[Proof\ of\ Theorem \ref{th-b}:] We divide the proof into two steps.\\
$\textbf{Step 1. ($u^{*}$ is an admissible control)}$ We write \eqref{optimal} as
\begin{equation}\label{star}
\begin{aligned}
u^{*}(t,i,X)=&-\left(\widetilde{R}(t,i)+D(t,i)^{\top}\widetilde{P}(t,i)D(t,i)\right)^{-1}\\
&\cdot\left(B(t,i)^{\top}\widetilde{P}(t,i)+D(t,i)^{\top}\widetilde{P}(t,i)C(t,i)+D(t,i)^{\top}\widetilde{\Lambda}(t,i)+\widetilde{S}(t,i)\right)X
\end{aligned}
\end{equation}
where $(\widetilde{P}(i),\widetilde{\Lambda}(i))_{i=1}^{l}$ is the solution of the SRE \eqref{Riccati-1}. Substituting \eqref{star} into the state process \eqref{state}, we have
\begin{equation*}
\left\{\begin{array}{l}
\begin{aligned}
dX(t)=&\left[A\left(t,\alpha_{t}\right)-B\left(t,\alpha_{t}\right)\left(\widetilde{R}(t,\alpha_{t})+D(t,\alpha_{t})^{\top}\widetilde{P}(t,\alpha_{t})D(t,\alpha_{t})\right)^{-1}\left(B(t,i)^{\top}\widetilde{P}(t,i)\right.\right.\\
&+\left.\left.D(t,i)^{\top}\widetilde{P}(t,i)C(t,i)+D(t,i)^{\top}\widetilde{\Lambda}(t,i)+\widetilde{S}(t,i)\right)\right]X(t)dt\\
&+\left[C\left(t,\alpha_{t}\right)-D\left(t,\alpha_{t}\right)\left(\widetilde{R}(t,\alpha_{t})+D(t,\alpha_{t})^{\top}\widetilde{P}(t,\alpha_{t})D(t,\alpha_{t})\right)^{-1}\left(B(t,i)^{\top}\widetilde{P}(t,i)\right.\right.\\
&\left.\left.+D(t,\alpha_t)^{\top}\widetilde{P}(t,\alpha_t)C(t,\alpha_t)+D(t,\alpha_t)^{\top}\widetilde{\Lambda}(t,i)+\widetilde{S}(t,i)\right)\right]X(t)dW(t),~t\in[0,T],
\end{aligned}\\
X(0)=x, \alpha_{0}=i_{0},
\end{array}\right.
\end{equation*}
Comparing the coefficients of the above equation with ones in \eqref{A,C}, we deduce from the proof of Lemma \ref{le-c} that for some $q>2$,
$$
\mathbb{E}\left[\sup_{0\leq t\leq T}|X(t)|^{q}\right]<\infty.
$$
Therefore, we have
\begin{align*}
&\mathbb{E}\left[\int_{0}^{T}|u^{*}(t,\alpha_{t})|^{2}dt\right]\\
\leq&C\mathbb{E}\left[\int_{0}^{T}\left|(1+|\widetilde{P}(t,\alpha_{t})|+|\widetilde{\Lambda}(t,\alpha_{t})|)|X^{*}(t,\alpha_{t})|\right|^{2}dt\right]\\
\leq&C\mathbb{E}\left[\sup\limits_{0\leq t\leq T}|X^{*}(t,\alpha_{t})|^{2}\right]+C\mathbb{E}\left[\left(\int_{0}^{T}|\widetilde{\Lambda}(t,\alpha_{t})|^{2}dt\right)\sup\limits_{0\leq t\leq T}|X^{*}(t,\alpha_{t})|^{2}\right]\\
\leq&C\mathbb{E}\left[\sup\limits_{0\leq t\leq T}|X^{*}(t,\alpha_{t})|^{2}\right]+C\mathbb{E}\left[\left(\int_{0}^{T}|\widetilde{\Lambda}(t,\alpha_{t})|^{2}dt\right)^{\frac{q}{q-2}}\right]^{\frac{q-2}{q}}\mathbb{E}\left[\sup\limits_{0\leq t\leq T}|X^{*}(t,\alpha_{t})|^{q}\right]^{\frac{2}{q}}\\
<&\infty
\end{align*}
where $C$ is a positive constant which can change from line to line.
\\
$\textbf{Step 2. ($u^{*}$ is an optimal control)}$ Applying It\^o's formula to $\langle P(t,\alpha_{t})X(t),X(t)\rangle$, we have
\begin{align*}
&\int_{0}^{T}d\langle P(t,\alpha_{t})X(t),X(t)\rangle\\
=&\int_{0}^{T}\langle dP(t,\alpha_{t})X(t),X(t)\rangle+\int_{0}^{T}\langle P(t,\alpha_{t})X(t),dX(t)\rangle+\int_{0}^{T}\langle dP(t,\alpha_{t})X(t),dX(t)\rangle\\
&+\int_{0}^{T}\langle \sum_{j,j'\in\mathcal{M}}(P(t,j)-P(t,j'))I_{\{\alpha_{t-}=j'\}}X(t),X(t)\rangle dN_{t}^{j'j}\\
=&\int_{0}^{T}\left\{-\left\langle P(t,\alpha_{t})A(t,\alpha_{t})X(t)+A(t,\alpha_{t})^{\top}P(t,\alpha_{s})X(t)+C(t,\alpha_{t})^{\top}P(t,\alpha_{t})C(t,\alpha_{t})X(t)\right.\right.\\
&+\Lambda(t,\alpha_{t})C(t,\alpha_{t})X(t)+C(t,\alpha_{t})^{\top}\Lambda(t,\alpha_{t})X(t)+Q(t,\alpha_{t})X(t)+\textstyle\sum_{j=1}^{l}q_{\alpha_{t}j}P(t,j)X(t)\\
&+H(t,\alpha_{t},P,\Lambda,S,R)X(t),X(t)\rangle+\langle P(t,\alpha_{t})\left(A(t,\alpha_{t})X(t)+B(t,\alpha_{t})u(t)\right),X(t)\rangle\\
&\left.+\langle \Lambda(t,\alpha_{t})\left(C(t,\alpha_{t})X(t)+D(t,\alpha_{t})u(t)\right),X(t)\rangle\right\}dt+\int_{0}^{T}\langle P(t,\alpha_{t})X(t),A(t,\alpha_{t})X(t)+B(t,\alpha_{t})u(t)\rangle dt\\
&+\int_{0}^{T}\langle \Lambda(t,\alpha_{t})X(t)+P(t,\alpha_{t})\left(C(t,\alpha_{t})X(t)+D(t,\alpha_{t})u(t)\right),C(t,\alpha_{t})X(t)+D(t,\alpha_{t})u(t)\rangle dt\\
&+\int_{0}^{T}\langle \sum_{j,j'\in\mathcal{M}}(P(t,j)-P(t,j'))I_{\{\alpha_{t-}=j'\}}X(t),X(t)\rangle dN_{t}^{j'j}+\int_{0}^{T}\langle\Lambda(t,\alpha_t)X(t),X(t)\rangle dW(t)\\
&+\int_{0}^{T}2\langle P(t,\alpha_t)[C(t,\alpha_t)X(t)+D(t,\alpha_t)u(t)],X(t)\rangle dW(t)\\
=&\int_{0}^{T}\left\{-\langle Q(t,\alpha_{t})X(t),X(t)\rangle+\langle P(t,\alpha_{t})\left(C(t,\alpha_{t})X(t)+D(t,\alpha_{t})u(t)\right),C(t,\alpha_{t})X(t)+D(t,\alpha_{t})u(t)\rangle\right.\\
&-\langle H(t,\alpha_{t},P,\Lambda,S,R)X(t),X(t)\rangle+2\langle D(t,\alpha_{t})^{\top}\Lambda(t,\alpha_{t})X(t)+B(t,\alpha_{t})^{\top}P(t,\alpha_{t}),u(t)\rangle\\
&\left.-\langle P(t,\alpha_{t})C(t,\alpha_{t})X(t),C(t,\alpha_{t})X(t)\rangle\right\}ds\Bigg]+\int_{0}^{T}\langle\sum_{j=1}^{l}(P(t,j)-P(t,\alpha_{t}))X(t),X(t)\rangle (dN_{t}^{\alpha_{t}j}-q_{\alpha_{t}j}dt)\\
&+\int_{0}^{T}2\langle P(t,\alpha_t)[C(t,\alpha_t)X(t)+D(t,\alpha_t)u(t)],X(t)\rangle dW(t)+\int_{0}^{T}\langle\Lambda(t,\alpha_t)X(t),X(t)\rangle dW(t)\\,
\end{align*}
where $(N^{j'j})_{j',j\in\mathcal{M}}$ are independent Poisson processes each with intensity $q_{j'j}$ and $\widetilde{N}^{j'j}_{t}=N^{j'j}_{t}-q_{j'j}t,~t\geq0$ are the corresponding compensated Poisson martingales under the filtration $\mathcal{F}$. Since $X(t)$ is continuous, the Brownian martingales and Poisson martingales above are local martingales. Therefore there exists an increasing localizing sequence of stopping time $\tau_n\uparrow+\infty$ as $n\rightarrow+\infty$ such that
\begin{align*}
&\mathbb{E}\left[\left\langle G(\alpha_{T\wedge\tau_n})X(T\wedge\tau_n),X(T\wedge\tau_n)\right\rangle+\int_{0}^{T\wedge\tau_n}\left\langle
\left(
  \begin{array}{ll}
    Q(t,\alpha_{t}) & S(t,\alpha_{t})^{\top}\\
    S(t,\alpha_{t}) & R(t,\alpha_{t})
  \end{array}
\right)
\left(
  \begin{array}{c}
    X(t)\\
    u(t)
  \end{array}
\right),
\left(
  \begin{array}{c}
    X(t)\\
    u(t)
  \end{array}
\right)
\right\rangle dt\right]\\
=&\left\langle P(0,i_{0})x,x\right\rangle+\mathbb{E}\left[\int_{0}^{T\wedge\tau_n}\left\{\langle\left(R(t,\alpha_{t})+D(t,\alpha_{t})^{\top}P(t,\alpha_{t})D(t,\alpha_{t})\right)u(t),u(t)\rangle\right.\right.\\
&+2\langle\left(B(t,\alpha_{t})^{\top}P(t,\alpha_{t})+D(t,\alpha_{t})^{\top}P(t,\alpha_{t})C(t,\alpha_{t})+D(t,\alpha_{t})^{\top}\Lambda(t,\alpha_{t})+S(t,\alpha_{t})\right)X(t),u(t)\rangle\\
&\left.+\langle\left(PB+C^{\top}PD+\Lambda D+S^{\top}\right)\left(R+D^{\top}PD\right)^{-1}\left(B^{\top}P+D^{\top}PC+D^{\top}\Lambda+S\right)(t,\alpha_{t})X(t),X(t)\rangle\right\}dt\Bigg].
\end{align*}
Since for any $t\in[0,T],~i\in\mathcal{M}$, we have
\begin{align*}
R(t,i)+D(t,i)^{\top}P(t,i)D(t,i)>0,
\end{align*}
then it holds that
\begin{equation}\label{value}
\begin{aligned}
&\mathbb{E}\left[\left\langle G(\alpha_{T\wedge\tau_n})X(T\wedge\tau_n),X(T\wedge\tau_n)\right\rangle+\int_{0}^{T\wedge\tau_n}\left\langle
\left(
  \begin{array}{ll}
    Q(t,\alpha_{t}) & S(t,\alpha_{t})^{\top}\\
    S(t,\alpha_{t}) & R(t,\alpha_{t})
  \end{array}
\right)
\left(
  \begin{array}{c}
    X(t)\\
    u(t)
  \end{array}
\right),
\left(
  \begin{array}{c}
    X(t)\\
    u(t)
  \end{array}
\right)
\right\rangle dt\right]\\
\geq&\left\langle P(0,i_{0})x,x\right\rangle.
\end{aligned}
\end{equation}
We recall that $\mathbb{E}\left[\sup_{0\leq t\leq T}|X(t)|^{q}\right]<\infty$ for some some $q>2$. Now we let $n\rightarrow\infty$ and deduce from the dominated convergence and monotone convergence theorems that
\begin{align*}
&\mathbb{E}\left[\left\langle G(\alpha_{T})X(T),X(T)\right\rangle+\int_{0}^{T}\left\langle
\left(
  \begin{array}{ll}
    Q(t,\alpha_{t}) & S(t,\alpha_{t})^{\top}\\
    S(t,\alpha_{t}) & R(t,\alpha_{t})
  \end{array}
\right)
\left(
  \begin{array}{c}
    X(t)\\
    u(t)
  \end{array}
\right),
\left(
  \begin{array}{c}
    X(t)\\
    u(t)
  \end{array}
\right)
\right\rangle dt\right]\\
\geq&\left\langle P(0,i_{0})x,x\right\rangle.
\end{align*}
At last, we point out that the equality holds when
\begin{align*}
u(t,i)=&-\left(R(t,i)+D(t,i)^{\top}P(t,i)D(t,i)\right)^{-1}\\
&\cdot\left(B(t,i)^{\top}P(t,i)+D(t,i)^{\top}P(t,i)C(t,i)+D(t,i)^{\top}\Lambda(t,i)+S(t,i)\right)X(t,i),\quad t\in[0,T].
\end{align*}
\end{proof}

\end{document}